\titleformat*{\subsection}{\large\scshape\bfseries}
\newtheorem{Theorem}{Theorem}[part]
\newtheorem{Lemma}{Lemma}[part]
\newtheorem{Corollary}{Corollary}[part]
\newtheorem{Remark}{Remark}[part]
\makeatletter \@addtoreset{equation}{section}
\def \E{\mathbb{E}}
\def \P{\mathbb{P}}
\def \R{\mathbb{R}}
\def\Ac{{\cal A}}
\def\Bc{{\cal B}}
\def\Fc{{\cal F}}
\def\Wc{{\cal W}}
\def \ind{1\!\!1}
\begin{document}

\title{A Class of Explicit optimal contracts in the face of shutdown}

\author[1]{Jessica Martin\footnote{jessica.martin@insa-toulouse.fr}}

\author[2]{Stéphane Villeneuve\footnote{stephane.villeneuve@tse-fr.eu\\ The authors acknowledge funding from ANR PACMAN and from ANR under grant ANR-17-EUR-0010 (Investissements d'Avenir program). The second author gratefully thanks the  FdR-SCOR ``Chaire March\'e des risques et cr\'eation de valeurs".}}

\affil[1]{INSA de Toulouse\\ IMT UMR CNRS 5219\\ Universit\'e de Toulouse\\ 135 Avenue de Rangueil 31077 Toulouse Cedex 4 France}

\affil[2]{Université Toulouse 1 Capitole, (TSE-TSMR)\\ 2 Rue du Doyen Gabriel Marty\\ 31000 Toulouse}

\date{\today}
\maketitle
\begin{abstract}
What type of delegation contract should be offered when facing a  risk of the magnitude of the pandemic we are currently experiencing and how does the likelihood of an exogenous early termination of the relationship modify the terms of a full-commitment contract? We study these questions by considering a dynamic principal-agent model that naturally  extends the classical Holmström-Milgrom setting to include a  risk of default whose origin is independent of the inherent agency problem. We obtain an explicit characterization of the optimal wage along with the optimal action provided by the agent. The optimal contract is linear by offering both a fixed share of the output which is similar to the standard shutdown-free Holmström-Milgrom model and a linear prevention mechanism that is proportional to the random lifetime of the contract.  We then tweak the model to add a possibility for risk mitigation through investment and study its optimality. 
\end{abstract}

\begin{keywords}
Principal-Agent problems, default risk, Hamilton-Jacobi Bellman equations.
\end{keywords}

\section{Introduction}

Without seeking to oppose public health and economic growth, there is no doubt that the management of the Covid crisis had serious consequences on entire sectors of the economy. The first few months of 2020 will go down in world history as a period of time characterized by massive layoffs, forced closures of non-essential companies, disruption of cross-border transportation whilst populations were subject to lockdown and/or social distancing measures and  hospitals and the medical world struggled to get a grasp on the Sars-Cov-2 pandemic. Whilst the immediate priority was saving lives, decongesting hospitals and preventing the spread of the disease, many extraordinary economic support measures were taken to help businesses and individuals stay afloat during these unprecedented times and in the hope of tempering the economic crisis that would follow. Although the world has lived through many crises over the past centuries, from several Panics in the 1800s and the Great Depression of the 1930s to the more recent Financial Crisis of 2008, never before has the global economy as a whole come to such a standstill due to an external event.  Such large shutdown risks do not only materialize during pandemics but throughout many major other large events. The massive bushfires that affected Australia towards the end of 2019, temporarily halting agriculture, construction activity and tourism in some areas of the country are another recent example. 
As we begin to see a  glimpse of hope for a way out through vaccination, the focus is turning to building the world of tomorrow with the idea that we must learn to live with such risks.  This paper tries to make its contribution by focusing on a simple microeconomic issue. In a world subject to moral hazard, how can we agree to an incentive contract whose obligations could be made impossible or at least very difficult because of the occurrence of a risk of the nature of the Covid19 pandemic? Including such a shutdown risk-sharing in contracts seems crucial going forward for at least two reasons. First, it is not certain that public authority will be able to continue to take significant economic support measures to insure the partners of a contract if the frequency of such global risks were to increase. On the other hand, the private insurance market does not offer protection against the risk of a pandemic which makes pooling too difficult. It therefore seems likely that we will have to turn to an organized form of risk sharing between the contractors.\\

Economic theory has a well-developed set of tools to analyze incentive and risk-sharing problems using expected-utility theory. Most of the now abundant literature related to dynamic contracting through a Principal-Agent model has, so far, mostly been based on continuously governed (eg. Brownian motion) output-processes. This was the case of the foundational work of Holmström and Milgrom \cite{HM} and many of its many extensions such as those of Schattler and Sung in \cite{SS} and \cite{SSung97}, and the more recent contributions of Sannikov in \cite{Sannikov08}, and Cvitanic et al \cite{CXZ} and \cite{CPT:18}. However some relatively recent works have introduced jump processes into continuous time contracting. Biais et al. were the first to do so in \cite{Biais} where they study optimal contracting between an insurance company and a manager whose effort can reduce the occurence of an underlying accident.  In a similar vein, work by Capponi and Frei \cite{CF} also used a jump-diffusion driven outcome process in order to include the possibility for accidents to negatively affect revenue. 
Here, we extend the classical Holmström and Milgrom \cite{HM} framework to include a shutdown risk.  We do not claim that this model with CARA preferences is general  enough to come up with robust economic facts, but it has the remarkable advantage of being explicitly creditworthy, which allows us to find an explicit optimal contract that disentangles the incentives  from external risk-sharing and allows us to understand the sensitivity of the optimal contract to the different exogenous parameters of the model. Our work uses a jump-diffusion process too and presents some structural similarities with \cite{CF} : both models consider a risk averse principal and agent with exponential utility and reach an explicit characterization of the optimal wage. However, Capponi and Frei combine the continuous part of the diffusion and the accident jump process additively and they are able to allow prevention through intensity control of the jump process. This makes sense as many accidents are preventable through known measures. Our framework uses a different form of jump diffusion to enable the shutdown event to completely stop revenue generation in a continuous-time setting. This is done by building on a standard continuous Brownian-motion based output process. The main novelty is the multiplicative effect of the jump risk : upon the arrival of the risk, the whole of the output process comes to a halt. As an extension, we allow the halt to no longer be a complete fatality : production may continue at a degraded level through an investment by the principal.  From a methodological viewpoint, our reasoning uses a now standard method in dynamic contracting based on \cite{Sannikov08} and \cite{CPT:18} which consists in transforming both the first-best and second-best problems into classical Markovian control problems. The solution to these control problems can be characterized through a Hamilton-Jacobi-Bellman equation. Quite remarkably, this equation has, in our context, an explicit solution that is closely linked to a so-called Bernouilli ODE which facilitates many extensions. \\

\indent To the best of our knowledge, this paper is the first to explicitly introduce a default in a dynamic Principal-agent framework, in both a first-best (also called full Risk-Sharing) and second-best (also called Moral Hazard) setting. 
A key feature of our study is that the shape of the optimal contract is linear.
More precisely, the agent's compensation is the sum of two functions: the first is linear with respect to the output and serves to give the incentives, while the second is linear with respect to the effective duration of the contract and serves to share the default risk. While the linear incentive part of the contract is in line with the existing literature on continuous-time Principal-Agent problems without default under exponential utilities, the risk-sharing part deserves some clarification. 
The contract exposes both agents to a risk of exogenous interruption but it has two different regimes that are determined by an explicit relation between the risk-aversions and the agent's effort cost. Under the first regime, the agent is more sensitive to the risk of default than the principal. In this case, the principal deposits on the date $0$ a positive amount onto an escrow account whose balance will then decrease over time at a constant rate.  It is crucial to observe that the later the default arrives, the more the amount in the escrow account decreases to a point where it may even become negative. If the default occurs, the principal transfers the remaining balance to the agent. Under the second regime, the principal is more sensitive to the risk of default. In this case, the principal deposits a negative amount into the escrow account, which now grows at a constant rate and symmetrical reasoning applies. This linearity contrasts with the optimum obtained in \cite{CF} as the additive contribution of their jump process to revenue generation leads to a sub-linear wage.  This result is coherent with the paper by Hoffman and Pfeil \cite{HoffmanPfeil} which proves that, in line with the empirical studies by Bertrand and Mullainathan \cite{Bertrand}, the agent must be rewarded or punished  for a risk that is beyond his control.\\ 
Finally, this paper also explicitly characterizes the optimal contract when a possibility for shutdown risk mitigation exists at a cost. Such a possibility is coherent with agency-free external risk: prevention is not possible, at least on a short-term or medium-term time scale. At best the principal may be able to invest to mitigate its effects. Crucially we find that in many circumstances, investing is not optimal for the principal. When it is, it is only optimal up until some cutoff time related to a balance between the cost of investment, the agent’s rents  and possible remaining gain. \\

The rest of the document is structured as follows. In Section \ref{sec:mod}, we present the model and the Principal-Agent problems that we consider.  In Section \ref{FB}, we  analyse the first-best case where the principal observes the agent’s effort. Then in Section \ref{sec:OC}, we give our main results and analysis. In Section \ref{sec:M}, we extend our model to include a possibility for mitigation upon a halt. 

\section{The Model}\label{sec:mod}
The model is inherited from the classical work of Holmström and Milgrom \cite{HM}. A principal contracts with an agent to manage a project she owns. The agent influences the project’s profitability by exerting an unobservable effort. For a fixed effort policy, the output process is still random and the idiosyncratic uncertainty is modeled by a Brownian motion.\\
We assume that the contract matures at time $T>0$ and both principal and agent are risk-averse with CARA preferences. The departure from the classical model is as follows: we assume the project is facing some external risk that could partially or totally interrupt the production at some random time $\tau$. The probability distribution of $\tau$ is assumed to be independent of the Brownian motion that drives the uncertainty of the output process and also independent of the agent’s actions. 
Finally, we assume that the contract offers a transfer $W$ at time $T$ from the principal to the agent that is a functional of the output process.

\subsection{Probability setup}

Let $T > 0$ be some fixed time horizon. The key to modeling our Principal-Agent problems under an agency-free external risk of default is the simultaneous presence over the interval $[0,T]$ of a continuous random process and a jump process as well as the ability to extend the standard mathematical techniques used for dynamic contracting to this mixed setting. Thus, we shall deal with two kinds of information : the information from the output process, denoted as $\mathbb{F}=(\Fc_t)_{t \ge 0}$ and the information from the default time, i.e. the knowledge of the time where the default occurred in the past, if the default has appeared. This construction is not new and occurs frequently in mathematical finance\footnote{We refer the curious reader to the two important references \cite{AksamitJeanblanc} and \cite{JB}.}. \\

The complete probability space that we consider will be denoted as $(\Omega, \mathcal{G}, \mathbb{P}^0)$, with two independent stochastic processes : 
\begin{itemize}
\item $B$ a standard one-dimensional $\mathbb{F}$-Brownian motion,
\item $N$ the right-continuous single-jump process defined as $N_t = \textbf{1}_{\tau \leq t}$, $t$ in $[0,T]$ where $\tau$ is some positive random variable independent of $B$ that models the default time. 
\end{itemize}
$N$ will also be referred to as the default indicator process. We therefore use the standard approach of progressive enlargement of filtration by considering $\mathbb{G} = \left\{ \mathcal{G}_t, t \geq 0\right\}$  the smallest complete right-continuous extension of  $\mathbb{F}$ that makes $\tau$ a $\mathbb G$-stopping time. Because $\tau$ is independent of $B$, $B$ is a $\mathbb{G}$-Brownian motion under $\mathbb{P}^0$ according to Proposition 1.21 p 11 in \cite{AksamitJeanblanc}. We also suppose that there exists a bounded deterministic compensator of $N$, $\Lambda_t = \int_0^t \lambda(s)\,ds $ for some bounded function $\lambda(.)$ called the intensity implying that:
$$ M_t = N_t - \int_0^t \lambda(s) (1-N_s) ds, \quad t \in [0,T]$$
is a $\mathbb{G}$-compensated martingale. Note that through knowledge of the function $\lambda,$ the principal and agent can compute at time 0 the probability of default happening over the contracting period $[0,T]$. Indeed : 
$$ \mathbb{P}(\tau \leq T) = 1 - \exp(-\Lambda_T).$$

We first suppose for computational ease that the intensity $\lambda$ is a constant. We will see in Section \ref{deterministicintensity} that our results may easily be lifted to more general deterministic compensators. 

\begin{Remark}
Here we will suppose that the compensator of $N$ is common knowledge to both the Principal and the Agent. We could imagine settings where the Principal and Agent's beliefs regarding the risk of default may differ : this natural extension of our work would call for analysis of the dynamic contracting problem under hidden information which is left for future research.
\end{Remark}

\subsection{Principal-Agent Problem}

We suppose that the agent agrees to work for the principal over a time period $[0,T]$ and provide up to the default time a costly action  $(a_t)_{t \in [0,T]}$ belonging to $\mathcal A$, where $\mathcal A$ denotes the set of admissible $\mathbb{F}$-predictable strategies that will be specified later on. The Principal-Agent problem models the realistic setting where the principal cannot observe the agent's effort. As such the agent chooses his action in order to maximize his own utility. The principal must offer a wage based on the information driven by the output process up to the default time that incentivizes the agent to work efficiently and contribute positively to the output process. Mathematically, the unobservability of the agent's behaviour is modeled through a change of measure. Under $\mathbb{P}^0$ , we assume that the project’s profitability evolves as
$$ X_t := x_0 + \int_0^t (1-N_s) dB_s.$$
Thus, $\P^0$ corresponds to the probability distribution of the profitability when the agent  makes no effort over $[0,T]$. When the agent makes an effort $a=(a_t)_t$, we shall assume that the project’s profitability evolves as
$$ X_t := x_0 + \int_0^t a_s(1-N_s) ds + \int_0^t (1-N_s) dB^a_s,$$
where $B^a$ is a $\mathbb{F}$-Brownian motion under a measure $\mathbb{P}^a$.
The agent fully observes the decomposition of the production process under a measure $\mathbb{P}^a$ whilst the principal only observes the realization of $X_t$. 
 In order for the model to be consistent, the probabilities $\P^0$ and $\P^a$ must be equivalent for all $(a_t)_{t \in [0,T\wedge \tau]}$ belonging to $\mathcal A$. Therefore, we introduce the following set of actions
 $$
 \Bc=\Big\{ a=(a_t)_t : \quad\mathbb{F}\hbox{-predictable and taking values in } [-A,A] \text{ for some }A>0 \Big\}.
 $$
 The action process in $\Bc$ are uniformly bounded by some fixed constant $A>0$ that will be assumed as large as necessary.
 For $a \in \Bc$, we define $\P^a$ as
  $$ \frac{d\mathbb{P}^a}{d\mathbb{P}^0}|\mathcal{G}_T = \exp\left( \int_0^T a_s(1-N_s) dB_s - \frac{1}{2} \int_0^T |{a_s}|^2 (1-N_s)ds \right):=L_T.$$
 Because $\E^{0}(L_T)=1$ , $(B^a_t)_{t \in [0,T]}$ with $B^a_t = B_t - \int_0^t a_s (1-N_s)ds, t \in [0,T]$ is a $\mathbb{G}$-Brownian motion under $\mathbb{P}^a$ according to Proposition 3.6 c) p 55  in \cite{AksamitJeanblanc}. It is key to note that if halt occurs, i.e. if $\tau \leq T$, then the production process is halted before $T$ meaning that :
$ X^{a}_{t \wedge \tau} = X_t^a, \quad t \in [0,T]$. Let us then observe that an action $a=(a_t)_t$ of $\Bc$ can be extended to a $\mathbb{G}$-predictable process $(\tilde a_t)_{t \in [0,T]}$ by setting $\tilde a_t= a_t\ind_{t \le \tau}$.

  The cost of effort for the agent is modeled through a quadratic cost function : 
$ \kappa(a) :=  \kappa \frac{a^2}{2},$
for $\kappa>0$ some fixed parameter. As a reward for the agent's effort, the principal pays him a wage $W$ at time $T$.  $W$ is assumed to be a $\mathcal{G}_{T\wedge \tau}$ random variable which means that the payment at time $T$ in case of an early default is known at time $\tau$.
The principal and the agent are considered to be risk averse and risk aversion is modeled through two  CARA utility functions  : 
$$ U_P(x) := -\exp(-\gamma_P x) \; \text{and} \; U_A(x) := -\exp(-\gamma_A x) ,$$
where $\gamma_P > 0$ and $\gamma_A > 0$ are two fixed constants modeling the principal's and the agent's risk aversion. \\

In this setting and for any given wage $W $, the agent maximizes his own utility and solves : 
\begin{equation}
\label{eq:V0A}
V_0^A(W) = \sup_{a \in \mathcal{B}} \E^{a}\left[ U_A\left(W - \int_0^T \kappa(a_s(1-N_s)) ds \right) \right].
\end{equation}
A wage $W$ is said to be incentive compatible if there exists an action policy $a^*(W) \in \Bc$ that maximises (\ref{eq:V0A}) and thus satisfies
$$ V_0^A(W) =  \E^{a^*(W)}\left[ U_A\left(W - \int_0^T \kappa(a_s^*(W)(1-N_s)) ds \right) \right].$$

When the principal is able to offer an incentive compatible wage $W$, she knows what the agent's best reply will be. As such the principal establishes a set $\mathcal{A}^*(W) \subset \mathcal{B}$ of best replies for the agent for any incentive compatible $W$. Therefore, the first task is to characterize the set of incentive-compatible wages $\Wc_{IC}$. 
Only then may the principal consider maximizing his own utility by solving :
\begin{equation}
\label{eq:pbMHa}
\sup_{W \in \mathcal{W}_{IC}} \; \sup_{a^* \in \mathcal{A}^*(W)} \; \E^{{a^*}(W)}\left[U_P\left(X^{a^*{(W)}}_T-W\right)\right]
\end{equation}
under the participation constraint 
\begin{equation}
\label{eq:PC}
\E^{{a^*}(W)}\left[U_A\left(W-\int_0^T\kappa(a_s^*(W)(1-N_s))ds\right)\right] \geq U_A(y_{PC}),
\end{equation}
where $y_{PC}$ is a monetary reservation utility for the agent.

\begin{Remark}
\label{rem:stand}
Problem (\ref{eq:pbMHa}) has been thoroughly analyzed in a setting where the output process may not default (see the pioneer papers \cite{HM}, \cite{SS} ). Setting $\kappa=1$ for simplicity,  the optimal action is  constant and given by :
$$a^* =  \frac{\gamma_P+1}{\gamma_P+\gamma_A+1},$$
and the optimal wage is linear in the output:
$$W = y_{PC} + a^* X_T+  \left( \frac{\gamma_A-1}{2} ({a^*})^2  \right)T.$$
 We may naturally expect to encounter an extension of these results in this setting.
 \end{Remark}

\section{Optimal First-best contracting}
\label{FB}

We begin with analysis of the first-best benchmark (the  full Risk-Sharing problem) which leads to a simple optimal sharing rule. Of course this problem is not the most realistic when it comes to modeling  dynamic contracting situations. However it provides a benchmark to which we can compare the more realistic Moral Hazard situation. Indeed, the principal's utility in the full Risk-Sharing problem is the best that the principal will ever be able to obtain in a contracting situation as he may observe (and it is thus assumed that he may dictate) the agent's action.\\
 
To write the first-best problem, we assume that both the principal and the agent observe the variations of the same production process $(X_t^{a})_{t \in [0,T]}$ under $\P^0$:
 \begin{equation}
 \label{eq:prodprocess1}
 X^{a}_t := x_0 + \int_0^t a_s(1-N_s) ds + \int_0^t (1-N_s) dB_s. \quad t \in [0,T]
 \end{equation}
 The agent is guaranteed a minimum value of expected utility through the participation constraint : 
\begin{equation}
\label{eq:PCFB}
\E\left[U_A\left(W-\int_0^T\kappa(a_s(1-N_s))ds\right)\right] \geq U_A(y_{PC}), \\
\end{equation}
but has no further say on the wage or action. Consider the admissible set : 
$$\mathcal{A}_{PC} := \left\{  (W,a) \hbox{ such that } W \text{ is }\mathcal{G}_{T\wedge\tau} \text{ measurable with }\E\left[\exp(-2\gamma_A W)\right] < + \infty,  (a_t)_t \in \Bc, \, \text{ and  }(\ref{eq:PCFB}) \; \text{is satisfied}\right\}. $$

The full Risk-Sharing problem involves maximizing the principal's utility across $\mathcal{A}_{PC}$~:

\begin{equation}
\label{eq:prob}
\sup_{(W,a) \in \mathcal{A}_{PC}} \quad \E\left[U_P\left(X^{a}_T-W\right)\right].
\end{equation}

\subsection{Tackling the Participation Constraint}

A first step to optimal contracting in this first-best setting involves answering the following question:  can we characterize the set $\Ac_{PC}$? 
Following the standard route, we will first establish a necessary condition. For a given pair $(W,a) \in \Ac_{PC}$, let us introduce the agent’s continuation utility $(U^{(W,a)}_t)_t$ as follows:
$$ U^{(W,a)}_t := \E_t\left[ U_A\left( W - \int_t^T \kappa(a_s(1-N_s)) ds\right) \right], $$
where we use the shorthand notation : $\E_t[.] := \E[. | \mathcal{G}_t].$ 
We may write the Agent's continuation value process as the product  : 
 $$  U^{(W,a)}_t = \mathcal{M}_t^{(W,a)} \mathcal{D}_t^{(W,a)},$$
 where :
 $$  \mathcal{M}_t^{(W,a)} :=  \E_t\left[ U_A\left( W - \int_0^T \kappa(a_s(1-N_s)) ds\right) \right] \quad \text{and} \quad \mathcal{D}_t^{(W,a)} := \exp\left( -\gamma_A \int_0^t \kappa(a_s(1-N_s)) ds\right).$$
 Observe that for any admissible pair $(W,a)  \in \Ac_{PC}$, the process $\mathcal{M}=(\mathcal{M}_t^{(W,a)})_t$ is a $\mathbb{G}$-square integrable martingale.
 According to the Martingale Representation Theorem for $\mathbb{G}$-martingales (see \cite{AksamitJeanblanc}, Theorem 3.12 p. 60), there exists some predictable pair $(z_s, l_s)$ in $ \mathbb{H}^2 \times \mathbb{H}^2$,
 where $\mathbb{H}^2$ is the set of $\mathbb{F}$-predictable processes $Z$ with $\E\left[\int_0^T |Z_t|^2 dt\right] < + \infty$, such that : 
 $$  \mathcal{M}_t^{(W,a)}  :=   \mathcal{M}_0^{(W,a)} + \int_0^t z_s(1-N_s) dB_s + \int_0^t l_s(1-N_s) dM_s. $$
 Integration by parts yields the dynamic of $U$, noting that $\mathcal{D}$ has finite variation : 
 \begin{align*}
 d U_t^{(W,a)}  
 &= -\gamma_A \kappa(a_t(1-N_s) )  U_t^{(W,a)} dt + \mathcal{D}_t^{(W,a)} z_t(1-N_s)  dB_t +  \mathcal{D}_t^{(W,a)} l_t(1-N_s)  dM_t.
 \end{align*}
Setting $ Z_t^{(W,a)}  := \mathcal{D}_t^{(W,a)} z_t \in \mathbb{H}^2$ and $ K_t^{(W,a)} := \mathcal{D}_t^{(W,a)} l_t  \in \mathbb{H}^2$, we obtain: 
 \begin{align*}
 d U_t^{(W,a)}   
 &= -\gamma_A \kappa(a_t(1-N_s) )  U_t^{(W,a)} dt +Z_t^{(W,a)}(1-N_s)  dB_t +  K_t^{(W,a)}(1-N_s)   dM_t.
 \end{align*}
By construction, we have that $U_T^{(W,a)} = U_A(W)$. It follows that
$\left(U_t^{(W,a)}, Z_t^{(W,a)} , K_t^{(W,a)} \right)$ is a solution to the BSDE: 
 \begin{align}\label{BSDEFB}
  - d U_t^{(W,a)} = - Z_t^{(W,a)} (1-N_s) dB_t - K_t^{(W,a)} (1-N_s) dM_t  + \gamma_A \kappa(a_t(1-N_s) )  U_t^{(W,a)} dt,
 \end{align}
 with $U_T^{(W,a)} = U_A(W).$ Therefore,  (\ref{eq:PCFB}) is satisfied if and only if   $U_0^{(W,a)} \geq U_A(y_{PC})$.

\begin{Remark}\label{uniquenessBSDEFB}
Let $\mathbb{S}^2$ be the set of $\mathbb{G}-$adapted RCLL processes $U$ such that $$\E[\sup_{0 \leq t \leq T} | U_t|^2] < + \infty.$$
Through Proposition 2.6 of  \cite{Dumi}, the solution to \eqref{BSDEFB} is unique in $(\mathbb{S}^2\times\mathbb{H}^2\times \mathbb{H}^2)$. Indeed, the driver $g(\omega, U)=\gamma_A \kappa(a_t(1-N_t))  U$ is uniformly Lipschitz in $U$ because $(a_t)_t$ is bounded and the terminal condition is in $L^2$.
\end{Remark}

To sum up, we have the following necessary condition for admissibility.
\begin{Lemma}
If $(W,a) \in \Ac_{PC}$ then there exists a unique solution $\left(U_t^{(W,a)}, Z_t^{(W,a)} , K_t^{(W,a)} \right)$  in $(\mathbb{S}^2\times\mathbb{H}^2\times \mathbb{H}^2)$ to the BSDE \eqref{BSDEFB} such that $U_0^{(W,a)} \geq U_A(y_{PC})$.
\end{Lemma}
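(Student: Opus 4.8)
The plan is to produce the solution triple explicitly and then verify the three assertions bundled in the statement: existence in the prescribed spaces, the characterization of the participation constraint, and (recalling Remark~\ref{uniquenessBSDEFB}) uniqueness. The natural candidate is already in hand, namely the continuation utility $(U_t^{(W,a)})_t$ defined above, so the work reduces to checking that it genuinely solves \eqref{BSDEFB} with the required integrability and that its initial value reproduces the left-hand side of \eqref{eq:PCFB}.

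First I would establish the integrability that underpins the whole construction. Writing $U_A(x)=-\exp(-\gamma_A x)$ and using that the cost integral $\int_0^T\kappa(a_s(1-N_s))\,ds$ is bounded by $\kappa A^2 T/2$ (since $a\in\Bc$ takes values in $[-A,A]$), the terminal variable $U_A\!\left(W-\int_0^T\kappa(a_s(1-N_s))\,ds\right)$ equals, up to a bounded factor, $-\exp(-\gamma_A W)$; the admissibility requirement $\E[\exp(-2\gamma_A W)]<+\infty$ built into $\Ac_{PC}$ thus guarantees it lies in $L^2$. Consequently $\mathcal{M}=(\mathcal{M}_t^{(W,a)})_t$, being the conditional expectation of an $L^2$ variable, is a genuine $\mathbb{G}$-square-integrable martingale, which is exactly what is needed to invoke the Martingale Representation Theorem for $\mathbb{G}$-martingales and obtain the pair $(z,l)\in\mathbb{H}^2\times\mathbb{H}^2$ as in the text above.

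Next I would run the integration-by-parts computation already sketched: because $\mathcal{D}=(\mathcal{D}_t^{(W,a)})_t$ has finite variation and is bounded (it is the exponential of $-\gamma_A\int_0^t\kappa(a_s(1-N_s))\,ds\le 0$, hence valued in $(0,1]$), the product rule applied to $U=\mathcal{M}\mathcal{D}$ produces no quadratic-covariation term and yields precisely the dynamics with driver $\gamma_A\kappa(a_t(1-N_t))\,U_t^{(W,a)}$ and terminal value $U_T^{(W,a)}=U_A(W)$, so the triple solves \eqref{BSDEFB}. The memberships $Z^{(W,a)}=\mathcal{D}z$, $K^{(W,a)}=\mathcal{D}l\in\mathbb{H}^2$ follow since $\mathcal{D}$ is bounded, and $U^{(W,a)}\in\mathbb{S}^2$ follows from Doob's maximal inequality applied to $\mathcal{M}$ together with the boundedness of $\mathcal{D}$. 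Uniqueness is then inherited from Remark~\ref{uniquenessBSDEFB} (Proposition~2.6 of \cite{Dumi}), whose hypotheses—a driver uniformly Lipschitz in $U$ thanks to the boundedness of $a$, and an $L^2$ terminal condition—are met here.

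Finally, the equivalence with the participation constraint is immediate once the pieces are assembled: by the tower property $U_0^{(W,a)}=\E\!\left[U_A\!\left(W-\int_0^T\kappa(a_s(1-N_s))\,ds\right)\right]$, which is exactly the left-hand side of \eqref{eq:PCFB}, so \eqref{eq:PCFB} holds if and only if $U_0^{(W,a)}\geq U_A(y_{PC})$. I expect the only genuine obstacle to be the integrability bookkeeping of the second and third paragraphs—pinning down that the terminal payoff is in $L^2$ and that multiplication by the bounded finite-variation process $\mathcal{D}$ keeps the representation processes inside $\mathbb{H}^2$ and $\mathbb{S}^2$; everything else is a direct reading-off from the stochastic-calculus identities already displayed.
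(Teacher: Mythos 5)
Your proposal is correct and follows essentially the same route as the paper: the continuation utility $U^{(W,a)}=\mathcal{M}\mathcal{D}$, square-integrability of $\mathcal{M}$ from the admissibility condition $\E[\exp(-2\gamma_A W)]<+\infty$, the $\mathbb{G}$-martingale representation theorem, integration by parts against the finite-variation factor $\mathcal{D}$, and uniqueness via Remark~\ref{uniquenessBSDEFB}. The only difference is that you spell out the integrability bookkeeping (the $L^2$ bound on the terminal value and the $\mathbb{S}^2$/$\mathbb{H}^2$ memberships via boundedness of $\mathcal{D}$ and Doob's inequality) which the paper leaves implicit.
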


To obtain a sufficient condition, we introduce, for $\pi=(y_0, a, \beta, H ) \in \R\times \Bc \times \mathbb{H}^2\times \mathbb{H}^2 $, the wage process $( W_t^{\pi})_t$ defined as
\begin{align}\label{forwardFB}
W_t^{\pi} &:= y_0 + \int_0^t \beta_s (1-N_s)dB_s + \int_0^t H_s (1-N_s)dM_s + \int_0^t \left\{ \frac{\gamma_A}{2} \beta_s^2(1-N_s) + \kappa(a_s(1-N_s)) \right. \nonumber\\
&\left. \frac{\lambda}{\gamma_A} [\exp(-\gamma_A H_s) - 1 + \gamma_A H_s ](1-N_s) \right\} ds,
\end{align}
and consider the set 
$$\Gamma := \left\{ (y_0, a, \beta, H ) \in  \R\times \Bc \times \mathbb{H}^2\times \mathbb{H}^2 \text{ such that } y_0 \ge y_{PC} \text{ and } \E\left[ \exp(-2\gamma_AW_T^\pi)\right]<+\infty.   \right\}.$$

We have the following result.
\begin{Lemma}
\label{lemma:main1PB}
For any $\pi \in \Gamma$, the pair $(W_T^\pi,a)$ belongs to $\Ac_{PC}$.
\end{Lemma}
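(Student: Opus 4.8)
The plan is to verify the three requirements defining $\Ac_{PC}$ for the pair $(W_T^\pi,a)$. Two of them are essentially free: membership $a\in\Bc$ and the integrability $\E\!\left[\exp(-2\gamma_A W_T^\pi)\right]<+\infty$ are both built into $\pi\in\Gamma$, while the $\Gc_{T\wedge\tau}$-measurability of $W_T^\pi$ follows because every integrand in \eqref{forwardFB} carries the factor $(1-N_s)$. Indeed, on $\{\tau\le T\}$ all contributions over $(\tau,t]$ vanish (the single jump at $\tau$ is still recorded, since $H_\tau(1-N_{\tau^-})=H_\tau$), so $W^\pi$ is frozen after default and $W_T^\pi=W^\pi_{T\wedge\tau}$; on $\{\tau>T\}$ this equality is trivial. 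As $W^\pi_{\cdot\wedge\tau}$ is $\mathbb{G}$-adapted and $T\wedge\tau$ is a $\mathbb{G}$-stopping time, $W_T^\pi$ is $\Gc_{T\wedge\tau}$-measurable. The real content is to establish the participation constraint \eqref{eq:PCFB}, which by the discussion preceding the lemma amounts to $U_0^{(W_T^\pi,a)}\ge U_A(y_{PC})$.

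To this end I would introduce the net-utility process and its CARA transform,
$$ Y_t:=W_t^\pi-\int_0^t \kappa(a_s(1-N_s))\,ds, \qquad \Ec_t:=\exp(-\gamma_A Y_t)=-U_A(Y_t),$$
and show that $\Ec$ is a nonnegative $\mathbb{G}$-local martingale. The cost term $\kappa(a_s(1-N_s))$ appearing in the drift of $W^\pi$ cancels exactly against $-\int_0^t\kappa\,ds$, so that
$$ dY_t=\beta_t(1-N_t)\,dB_t+H_t(1-N_t)\,dM_t+\Big\{\tfrac{\gamma_A}{2}\beta_t^2+\tfrac{\lambda}{\gamma_A}\big(e^{-\gamma_A H_t}-1+\gamma_A H_t\big)\Big\}(1-N_t)\,dt.$$
Applying the Itô formula for jump processes to $\Ec_t=\exp(-\gamma_A Y_t)$, writing $dN_t=dM_t+\lambda(1-N_t)\,dt$ and collecting the finite-variation terms, the second-order contribution $-\tfrac{\gamma_A^2}{2}\beta_t^2$ cancels the $\tfrac{\gamma_A}{2}\beta_t^2$ drift; the linear piece $\gamma_A H_t$ inside the correction cancels the compensator $-\lambda H_t$ produced when $H_t(1-N_t)\,dM_t$ is re-expressed through $dN_t$; and the remaining $\lambda(1-N_t)(e^{-\gamma_A H_t}-1)$ cancels the compensated jump $\lambda(1-N_t)(1-e^{-\gamma_A H_t})$ of the CARA transform. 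I expect this drift cancellation — which is precisely what the exponential-correction term $\tfrac{\lambda}{\gamma_A}(e^{-\gamma_A H_s}-1+\gamma_A H_s)$ in \eqref{forwardFB} was engineered to produce — to be the main computational obstacle, the delicate point being the correct bookkeeping of the jump part of $M$ under the exponential map.

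Once the drift is shown to vanish, $\Ec$ is a nonnegative local martingale, hence a supermartingale by Fatou, so $\E[\Ec_T]\le \Ec_0=\exp(-\gamma_A y_0)$. Since $\Ec_T=\exp\!\big(-\gamma_A W_T^\pi+\gamma_A\int_0^T\kappa(a_s(1-N_s))\,ds\big)$ with the cost integral bounded (because $a\in\Bc$ is bounded), the integrability coming from $\pi\in\Gamma$ makes $\E[\Ec_T]$ finite, and therefore
\begin{align*}
U_0^{(W_T^\pi,a)}&=\E\!\left[U_A\!\left(W_T^\pi-\int_0^T\kappa(a_s(1-N_s))\,ds\right)\right]=-\E[\Ec_T]\\
&\ge -\exp(-\gamma_A y_0)=U_A(y_0)\ge U_A(y_{PC}),
\end{align*}
the last inequality following from $y_0\ge y_{PC}$ and the monotonicity of $U_A$. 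This establishes \eqref{eq:PCFB} and completes the verification that $(W_T^\pi,a)\in\Ac_{PC}$. Note that only the supermartingale inequality is used, so there is no need to promote $\Ec$ to a genuine martingale, which sidesteps any delicate uniform-integrability argument.
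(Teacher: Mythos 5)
Your proof is correct, and it reaches the conclusion by a genuinely different final step than the paper. The core It\^o computation is essentially the same in both arguments --- everything rests on the fact that the drift in \eqref{forwardFB} is engineered so that the CARA transform of the net-utility process has vanishing drift --- but the paper concludes via BSDE uniqueness whereas you conclude via a supermartingale bound. Concretely, the paper applies It\^o to $U_A(W_t^\pi)$, recognizes the resulting triplet $\left(U_A(W_t^\pi),-\gamma_A U_A(W_t^\pi)\beta_t, U_A(W_t^\pi)(e^{-\gamma_A H_t}-1)\right)$ as a solution of the linear BSDE \eqref{BSDEFB} with terminal condition $U_A(W_T^\pi)$, and invokes the uniqueness of Remark \ref{uniquenessBSDEFB} (this is precisely where the condition $\E[\exp(-2\gamma_A W_T^\pi)]<+\infty$ in the definition of $\Gamma$ is used, to put the terminal value in $L^2$); this yields the exact identity $U_A(y_0)=\E\left[U_A\left(W_T^\pi-\int_0^T\kappa(a_s(1-N_s))\,ds\right)\right]$. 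You instead absorb the cost integral into the state variable, check that $\Ec_t=\exp(-\gamma_A Y_t)$ is a driftless nonnegative local martingale, and use Fatou to get $\E[\Ec_T]\le\Ec_0$. Your route is more elementary --- it bypasses the well-posedness machinery for BSDEs with default entirely and needs nothing beyond nonnegativity --- but it only delivers the one-sided bound $\E[U_A(\cdot)]\ge U_A(y_0)$ rather than the equality. For the present lemma the inequality is all that is required, since only the participation constraint \eqref{eq:PCFB} must be verified; what the paper's BSDE route buys in addition is the exact identification of $y_0$ as the agent's certainty equivalent, which underpins the ``promised utility'' interpretation used in the sequel. Your explicit treatment of the $\Gc_{T\wedge\tau}$-measurability of $W_T^\pi$ (which the paper leaves implicit) and your drift bookkeeping for the jump part of $M$ are both correct.
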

\begin{proof}
We apply It\^o’s formula to the process $Y_t^\pi=U_A(W_t^\pi)$ to obtain
$$
dY_t^\pi=-\gamma_A Y_t^\pi \beta_t(1-N_t) \,dB_t +Y_t^\pi\left( e^{-\gamma_AH_t}-1\right)(1-N_t)\,dM_t-\gamma_A\kappa(a_t(1-N_t))Y_t^\pi\,dt.
$$
Moreover, because $\pi \in \Gamma$, $Y_T^\pi=U_A(W_T^\pi)$ is square-integrable. Remark \ref{uniquenessBSDEFB} yields the triplet $\left(Y_t^\pi,-\gamma_AY_t^\pi\beta_t, Y_t^\pi ( e^{-\gamma_AH_t}-1)\right)$ is the unique solution 
in $(\mathbb{S}^2\times\mathbb{H}^2\times \mathbb{H}^2)$ to BSDE \eqref{BSDEFB} with terminal condition $U_A(W_T^\pi)$ when $\pi \in \Gamma$. Therefore,
$$
Y_0^\pi=U_A(y_0)=\E\left[ U_A\left(W_T^\pi-\int_0^T \kappa(a_s(1-N_s))\,ds\right)\right]\ge U_A(y_{PC}),
$$
and thus \eqref{eq:PCFB} is satisfied.
\end{proof}

\begin{Remark} \label{boundedcontrol}
The admissible contracts are essentially the terminal values of the controlled processes \eqref{forwardFB} for $\pi \in \Gamma$. The difficulty is that we do not know how to characterize the $\beta$ and $H$ processes that guarantee that $\pi$ belongs to  $\Gamma$. Nevertheless, it is easy to check by a standard application of the Gronwall lemma that if $\beta$ and $H$ are bounded then $\pi \in \Gamma$. This last observation will prove to be crucial in the explicit resolution of our problem.
\end{Remark}

\subsection{First-best Dynamic contracting}

Using Lemma \ref{lemma:main1PB}, the full Risk-Sharing problem under default writes as the Markovian control problem : 
 \begin{equation}
 \label{eq:pbparam}
V_P^{FB}:= \underset{\pi=(y_0, a,Z,K) \in \Gamma}{\text{sup}}\E\left[ U_P\left(X_T^{(x_0,a)} - W_T^{\pi}\right)\right],
 \end{equation}
 where  $X_t^{(x_0,a)}$ is given by :
 $$ dX^{(x_0,a)}_s =  a_s(1-N_s) ds +  (1-N_s) dB_s,$$ 
 with $X_0^{(x_0,a)}=x_0$
and the wage process is given by :
 \begin{align*}
 \label{eq:wageparam}
 dW_s^{\pi} =  Z_s (1-N_s)dB_s &+ K_s (1-N_s)dM_s \\
 &+  \left\{ \frac{\gamma_A}{2} Z_s^2 (1-N_s)+ \kappa(a_s(1-N_s)) + \frac{\lambda}{\gamma_A} [\exp(-\gamma_A K_s) - 1 + \gamma_A K_s ](1-N_s)\right\} ds,
 \end{align*}
 with $W_0^{\pi}=y_0$.\\
 We have the following key theorem for the first-best problem. 
 
 \begin{Theorem}
\label{theo:mainFB}
Let  $a^*_t = \dfrac{1}{\kappa}, Z^*_t = \dfrac{\gamma_P}{\gamma_P + \gamma_A},$ and let : 
 $$ K^*_t = \frac{1}{\gamma_P + \gamma_A} \log(\Phi_0(t)) ,$$
where :
\begin{equation}
\label{eq:phi}
\Phi_0(t) := \left(  \frac{c_1 + c_2}{c_1} \exp\left(c_1 \frac{\gamma_A}{\gamma_P + \gamma_A}(T-t)\right) - \frac{c_2}{c_1}\right)^{\frac{\gamma_P+\gamma_A}{\gamma_A}} ,
\end{equation}
with :
$$ c_1 := \frac{\gamma_P^2 \gamma_A}{2(\gamma_P+\gamma_A) } - \frac{\gamma_P}{2\kappa} - \lambda \frac{\gamma_P + \gamma_A}{\gamma_A} \quad \text{and} \quad c_2 := \lambda \frac{\gamma_P + \gamma_A}{\gamma_A}.\\$$

Then $\pi^*=(y_{PC},a^*,Z^*,K^*) \in \Gamma$ parameterizes the optimal contract $(W^{\pi^*}_{T},a^*)$ for the first-best problem. 
\end{Theorem}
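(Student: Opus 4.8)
The plan is to treat \eqref{eq:pbparam} as a Markovian stochastic control problem and solve it by the dynamic programming / verification route announced in the introduction. Since $U_P$ depends only on the difference $D_t := X_t^{(x_0,a)} - W_t^{\pi}$, I would first record the dynamics of $D$. Before default the controlled drift collapses, after compensating the jump through $dM$, to $a - \kappa\frac{a^2}{2} - \frac{\gamma_A}{2}Z^2 - \frac{\lambda}{\gamma_A}(e^{-\gamma_A K}-1)$, the diffusion coefficient is $(1-Z)$, and at $\tau$ the difference jumps by $-K$; after $\tau$ every coefficient carries the factor $(1-N)$ so $D$ freezes. Hence $(t,D,N)$ is a Markov state. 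I would then posit the value function $v(t,D,0) = -\exp(-\gamma_P D)\,\phi(t)$ on $\{N=0\}$ with $\phi(T)=1$, and note that on $\{N=1\}$ the frozen dynamics force $v(t,D,1) = -\exp(-\gamma_P D) = U_P(D)$ for every $t$.

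Next I would write the HJB equation on $\{N=0\}$, whose generator combines the diffusion part with the jump term $\lambda\,[v(t,D-K,1)-v(t,D,0)]$. Factoring out $-\exp(-\gamma_P D)$, the equation reduces to $\inf_{a,Z,K} G = 0$, where $G = \phi' + (\text{control-dependent terms})\,\phi + \lambda e^{\gamma_P K}$. Since $\phi>0$ the three controls decouple and the first-order conditions are explicit: minimizing the quadratic in $a$ gives $a^* = \frac{1}{\kappa}$; minimizing $\frac{\gamma_P\gamma_A}{2}Z^2 + \frac{\gamma_P^2}{2}(1-Z)^2$ gives $Z^* = \frac{\gamma_P}{\gamma_P+\gamma_A}$; and setting the derivative of the $K$-terms to zero yields the stationarity relation $e^{(\gamma_P+\gamma_A)K}=\phi$, i.e. $K^* = \frac{1}{\gamma_P+\gamma_A}\log\phi$. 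Convexity of $G$ in each variable confirms these are minimizers, which already pins down the three feedback controls of the statement and, through the $K$-relation, identifies $\phi$ with $\Phi_0$.

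I would then substitute $a^*,Z^*,K^*$ back into $G=0$. Collecting the linear-in-$\phi$ terms reproduces exactly the constant $c_1$, while the $\phi^{\gamma_P/(\gamma_P+\gamma_A)}$ terms collect into $c_2$, giving the Bernoulli ODE $\phi'(t) + c_1\phi(t) + c_2\,\phi(t)^{\gamma_P/(\gamma_P+\gamma_A)} = 0$ with $\phi(T)=1$. Setting $\psi := \phi^{\gamma_A/(\gamma_P+\gamma_A)}$ linearizes this to $\psi' + \frac{\gamma_A}{\gamma_P+\gamma_A}c_1\,\psi = -\frac{\gamma_A}{\gamma_P+\gamma_A}c_2$, whose explicit solution with $\psi(T)=1$ is $\frac{c_1+c_2}{c_1}\exp\!\big(c_1\frac{\gamma_A}{\gamma_P+\gamma_A}(T-t)\big) - \frac{c_2}{c_1}$; raising to the power $(\gamma_P+\gamma_A)/\gamma_A$ returns precisely $\Phi_0$. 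Along the way I would check that $\Phi_0(t)>0$ and is bounded on $[0,T]$, so that $K^*_t$ is well-defined and bounded; together with $a^*,Z^*$ constant this gives, via Remark \ref{boundedcontrol}, that $\pi^*=(y_{PC},a^*,Z^*,K^*)\in\Gamma$.

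Finally I would run the verification argument. For any admissible $\pi\in\Gamma$, It\^o's formula applied to $v(t,D_t^{\pi},N_t)$ together with $\inf G = 0$ (hence $\mathcal{L}v\le 0$ pointwise) shows this process is a supermartingale, so $\E[U_P(X_T-W_T)] = \E[v(T,D_T,N_T)] \le v(0,x_0-y_0,0) \le v(0,x_0-y_{PC},0)$, the last inequality because $v$ is increasing in $D$ and $y_0\ge y_{PC}$ forces the optimal initial value $y_0^*=y_{PC}$. For the feedback controls $\pi^*$ one has $\mathcal{L}v\equiv 0$, so the same process is a true martingale and the bound is attained with equality. I expect the main obstacle to be exactly the passage from local to true martingale: controlling the stochastic integral $\int \gamma_P e^{-\gamma_P D_t}\phi(t)(1-Z_t)\,dB_t$ and the compensated-jump term requires integrability, which I would secure from the boundedness of the optimal controls, the good exponential moments of the Gaussian-type process $X$, and the terminal moment condition $\E[\exp(-2\gamma_A W_T^{\pi})]<+\infty$ built into $\Gamma$.
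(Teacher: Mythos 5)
Your proposal is correct and follows essentially the same route as the paper: the ansatz $v=U_P(x-y)\Phi_0(t)$ in the paper is exactly your reduction to the difference variable $D=X-W$, and the first-order conditions, the resulting Bernoulli ODE for $\Phi_0$ (which the paper solves by citation where you linearize via $\psi=\phi^{\gamma_A/(\gamma_P+\gamma_A)}$), the verification step, and the appeal to boundedness of the optimal controls to conclude $\pi^*\in\Gamma$ all coincide with the paper's argument.
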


The rest of this subsection is dedicated to the proof of this Theorem.  We first make the following observation. As $X$ remains constant after $\tau$, the principal has no further decision to make after the default time. Thus, its value function is constant and equal to $ U_P(x-y)$ on the interval $[\tau,T]$.\\ 
We now focus on the control part of the problem (i.e. computation of the optimal control triplet $\tilde \pi=(a,Z,K)$ for a given pair $(x_0,y_0)$). To do so, we follow the dynamic programming approach developed in \cite{Pham:10}, Section 4
to define the value function  
\begin{equation}\label{StochasticControlFB}
V(0,x_0,y_0)=\sup_{\tilde \pi \in \tilde \Gamma} \E\left[ U_P(X_{T}^{a} - W_{T}^{\tilde \pi} )(1-N_T)+\int_0^T U_P(X_t^a-W_t^{\tilde \pi})\lambda e^{-\lambda t}\,dt \right],
\end{equation}
where
$$
\tilde \Gamma = \left\{ \tilde \pi \in \mathcal{B} \times   \mathbb{H}^2 \times \mathbb{H}^2\right\},$$
Because $\Gamma \subset \R\times\tilde\Gamma$, we have 
$$ V_P^{FB} \le \sup_{y_0 \geq y_{PC}} V(0,x_0,y_0).$$

According to stochastic control theory, the Hamilton-Jacobi-Bellman equation associated to the stochastic control problem \eqref{StochasticControlFB} is the following (see \cite{Oksendal}): 
\begin{align}
\label{eq:HJBFB}
\partial_t v(t,x,y) + \sup_{a,Z,K} \left\{ \partial_xv(t,x,y)a + \partial_yv(t,x,y) \left[\frac{\gamma_A}{2} Z^2 + \kappa(a) +  \frac{\lambda}{\gamma_A} [\exp(-\gamma_A K) - 1]\right] \right. \nonumber\\
\left.+ \lambda \left[ U_P(x-y-K) - v_0(t,x,y)\right]+ \partial_{yy} v(t,x,y) \frac{Z^2}{2} + \frac{1}{2} \partial_{xx} v(t,x,y)+ \partial_{xy}v(t,x,y) Z 
 \right\} = 0,
\end{align}
with the boundary condition :
$$v(T,x,y) =  U_P(x-y).$$

It happens that the HJB equation \eqref{eq:HJBFB} is explicitly solvable by exploiting the separability property of the exponential utility function. 
\begin{Lemma}
\label{lem:V0RSFB}
The function $v(t,x,y)=U_P(x-y) \Phi_0(t)$
with~:
$$\Phi_0(t) = \left(  \frac{c_1 + c_2}{c_1} \exp\left(c_1 \frac{\gamma_A}{\gamma_P + \gamma_A}(T-t)\right) - \frac{c_2}{c_1}\right)^{\frac{\gamma_P+\gamma_A}{\gamma_A}} ,$$
where~:  
$$ c_1 = \frac{\gamma_P^2 \gamma_A}{2(\gamma_P+\gamma_A) } - \frac{\gamma_P}{2\kappa} - \lambda \frac{\gamma_P + \gamma_A}{\gamma_A} \quad \text{and} \quad c_2 = \lambda \frac{\gamma_P + \gamma_A}{\gamma_A},$$
solves (in the classical sense) the HJB partial differential equation (\ref{eq:HJBFB}). \\
Furthermore $a^*_t = \dfrac{1}{\kappa}, Z^*_t = \dfrac{\gamma_P}{\gamma_P + \gamma_A}$ and $K^*_t = \dfrac{1}{\gamma_P + \gamma_A} \log(\Phi_0(t))$ are the optimal controls.
\end{Lemma}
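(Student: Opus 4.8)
The plan is to verify the ansatz directly, exploiting the multiplicative separation afforded by the CARA utility. First I would compute the partial derivatives of $v(t,x,y)=U_P(x-y)\Phi_0(t)$. Since $U_P(x-y)=-\exp(-\gamma_P(x-y))$, every spatial derivative reproduces $U_P(x-y)$ up to a constant: $\partial_x v=-\gamma_P U_P(x-y)\Phi_0$, $\partial_y v=\gamma_P U_P(x-y)\Phi_0$, $\partial_{xx}v=\partial_{yy}v=\gamma_P^2 U_P(x-y)\Phi_0$, $\partial_{xy}v=-\gamma_P^2 U_P(x-y)\Phi_0$, while $\partial_t v=U_P(x-y)\Phi_0'$. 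The jump term likewise factors as $U_P(x-y-K)=U_P(x-y)e^{\gamma_P K}$. Plugging these into \eqref{eq:HJBFB} shows that every summand carries the common factor $U_P(x-y)$.

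Next I would divide through by $U_P(x-y)<0$. Because this factor is strictly negative (which forces $\Phi_0>0$ for $v$ to be a genuine principal value), the supremum over $(a,Z,K)$ becomes an infimum of the bracketed expression. The decisive feature is that the three controls decouple: $a$ enters only through $\gamma_P\Phi_0(\kappa(a)-a)$, $Z$ only through the quadratic $\tfrac{\gamma_P\Phi_0}{2}(\gamma_A+\gamma_P)Z^2-\gamma_P^2\Phi_0 Z$, and $K$ only through $\lambda e^{\gamma_P K}+\gamma_P\Phi_0\tfrac{\lambda}{\gamma_A}e^{-\gamma_A K}$. Each is strictly convex (in $a$ because $\kappa>0$, in $Z$ because $\gamma_A+\gamma_P>0$, in $K$ because both exponentials have positive second derivative once $\Phi_0>0$), so the first-order conditions are necessary and sufficient. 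Solving them yields exactly $a^*=1/\kappa$, $Z^*=\gamma_P/(\gamma_P+\gamma_A)$, and the relation $\Phi_0=e^{(\gamma_P+\gamma_A)K^*}$, i.e. $K^*=\frac{1}{\gamma_P+\gamma_A}\log\Phi_0$.

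Then I would substitute the minimizers back. A short computation collapses the $a$- and $Z$-contributions together with $\tfrac12\partial_{xx}v$ into $(c_1+c_2)\Phi_0$, while the $K$-contributions, using $e^{\gamma_P K^*}=\Phi_0^{\gamma_P/(\gamma_P+\gamma_A)}$ and $e^{-\gamma_A K^*}=\Phi_0^{-\gamma_A/(\gamma_P+\gamma_A)}$, collapse to $c_2(\Phi_0^{\gamma_P/(\gamma_P+\gamma_A)}-\Phi_0)$. The HJB thus reduces to the scalar Bernoulli equation $\Phi_0'(t)+c_1\Phi_0(t)+c_2\Phi_0(t)^{\gamma_P/(\gamma_P+\gamma_A)}=0$ with terminal condition $\Phi_0(T)=1$ (matching $v(T,\cdot,\cdot)=U_P(x-y)$). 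Finally I would solve it via the classical linearizing substitution $\psi:=\Phi_0^{\gamma_A/(\gamma_P+\gamma_A)}$, which turns it into the linear ODE $\psi'+c_1\frac{\gamma_A}{\gamma_P+\gamma_A}\psi+c_2\frac{\gamma_A}{\gamma_P+\gamma_A}=0$; integrating and imposing $\psi(T)=1$ reproduces the stated closed form, and differentiating that formula confirms it solves the equation.

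The main obstacle is less the algebra than the well-posedness underpinning it: one must ensure $\Phi_0(t)>0$ throughout $[0,T]$, since positivity is needed both for $K^*=\frac{1}{\gamma_P+\gamma_A}\log\Phi_0$ to be defined and for the convexity and sign arguments that convert the supremum into an infimum. As $c_2>0$ while the sign of $c_1$ is parameter-dependent, I would check that the base $\psi(t)=\frac{c_1+c_2}{c_1}e^{c_1\frac{\gamma_A}{\gamma_P+\gamma_A}(T-t)}-\frac{c_2}{c_1}$ stays strictly positive; it equals $1$ at $t=T$ and is monotone in $T-t$, so in the regime $c_1>0$ one gets $\psi\ge 1$ immediately, whereas $c_1<0$ may require a parameter or horizon restriction to avoid a zero. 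A secondary point is that this lemma only asserts a classical solution of the HJB and the identification of the pointwise minimizers; that $\pi^*$ is admissible and attains $V_P^{FB}$ belongs to the verification argument of Theorem \ref{theo:mainFB}, relying on the boundedness observation in Remark \ref{boundedcontrol} rather than on this computation.
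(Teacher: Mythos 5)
Your proposal is correct and follows essentially the same route as the paper's proof: plug in the separable ansatz $U_P(x-y)\Phi_0(t)$, divide out the common factor $U_P(x-y)<0$ to turn the supremum into an infimum, solve the decoupled first-order conditions for $a$, $Z$, $K$, and reduce the HJB to the Bernoulli ODE $\Phi_0'+c_1\Phi_0+c_2\Phi_0^{\gamma_P/(\gamma_P+\gamma_A)}=0$ with $\Phi_0(T)=1$. Your closing remarks on the positivity of $\Phi_0$ and on deferring admissibility and verification to Theorem \ref{theo:mainFB} are points the paper treats only implicitly, but they do not change the argument.
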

\begin{proof}
We search for a solution to Equation (\ref{eq:HJBFB})  for a $v$ of the form : $$v(t,x,y)=U_P(x-y) \Phi_0(t),$$ with $\Phi_0$ a positive mapping. Such a $v$ satisfies  (\ref{eq:HJBFB})  if and only if  $\Phi_0(t)$ solves the PDE : 
\begin{align*}
 \Phi_0'(t) + \inf_{a,Z,K}\left\{ -\gamma_P \Phi_0(t)a + \gamma_P \Phi_0(t) \left( \frac{\gamma_A}{2}Z^2 + \kappa(a) + \frac{\lambda}{\gamma_A} \left\{\exp(-\gamma_A K) -1 \right\} \right) \right. \\
 \left. + \gamma_P^2 \Phi_0(t) \frac{Z^2}{2} + \frac{\gamma_P^2}{2} \Phi_0(t) - \gamma_P^2 \Phi_0(t) Z + \lambda \left( \exp(\gamma_P K) - \Phi_0(t) \right)\right\}=0,
 \end{align*}
with the boundary condition $\Phi_0(T)=1.$  As $\Phi_0$ is a positive mapping, the infimum is well defined. We derive the following first order conditions that must be satisfied by the optimal controls : 
$$ \begin{cases}
\gamma_P \Phi_0(t) = \gamma_P \kappa a \Phi_0(t)\\
\gamma_P \Phi_0(t) Z (\gamma_A  + \gamma_P) = \gamma_P^2 \Phi_0(t)\\
\gamma_P \Phi_0(t) \lambda \exp(-\gamma_A K) = \gamma_P \lambda \exp(\gamma_P K),
\end{cases}$$
equating to : 
$$ a^* = \frac{1}{\kappa}, \quad Z^* = \frac{\gamma_P}{\gamma_P + \gamma_A}, \quad K^* = \frac{\log(\Phi_0(t))}{\gamma_P + \gamma_A}.$$
It follows that : 
\begin{align*}
&\inf_{a,Z,K}\left\{ -\gamma_P \Phi_0(t)a + \gamma_P \Phi_0(t) \left( \frac{\gamma_A}{2}Z^2 + \kappa(a) + \frac{\lambda}{\gamma_A} \left\{\exp(-\gamma_A K) -1 \right\} \right) \right. \\
 &\left. + \gamma_P^2 \Phi_0(t) \frac{Z^2}{2} + \frac{\gamma_P^2}{2} \Phi_0(t) - \gamma_P^2 \Phi_0(t) Z + \lambda \left( \exp(\gamma_P K) - \Phi_0(t) \right)\right\}\\
 &=   -\gamma_P \Phi_0(t)a^* + \gamma_P \Phi_0(t) \left( \frac{\gamma_A}{2}{Z^*}^2 + \kappa(a^*) + \frac{\lambda}{\gamma_A} \left\{\exp(-\gamma_A K^*) -1 \right\} \right)  \\
 &+ \gamma_P^2 \Phi_0(t) \frac{{Z^*}^2}{2} + \frac{\gamma_P^2}{2} \Phi_0(t) - \gamma_P^2 \Phi_0(t) Z^* + \lambda \left( \exp(\gamma_P K^*) - \Phi_0(t) \right) \\
 &=  \underbrace{\Phi_0(t) \frac{\gamma_P^2\gamma_A}{2(\gamma_P+\gamma_A)}}_{\text{terms with } Z^*}  \underbrace{- \Phi_0(t) \frac{\gamma_P}{2\kappa}}_{ \text{terms with }a^* }  \underbrace{- \lambda  \frac{\gamma_P + \gamma_A}{\gamma_A} \Phi_0(t) + \lambda \frac{\gamma_P + \gamma_A}{\gamma_A} \Phi_0(t)^{\frac{\gamma_P}{\gamma_P+\gamma_A}}.}_{ \text{terms with }K^*}
 \end{align*}

We may inject this expression back into the PDE on $\Phi_0$. Doing so yields the following Bernoulli equation : 
$$ \Phi'_0(t) + c_1\Phi_0(t) + c_2 \Phi_0(t)^{\frac{\gamma_P}{\gamma_P+\gamma_A}}=0, \quad \Phi_0(T) = 1,$$
where 
$$ c_1 = \frac{\gamma_P^2 \gamma_A}{2(\gamma_P+\gamma_A) } - \frac{\gamma_P}{2\kappa} - \lambda \frac{\gamma_P + \gamma_A}{\gamma_A} \quad \text{and} \quad c_2 = \lambda \frac{\gamma_P + \gamma_A}{\gamma_A}.$$

The unique solution to this equation is (see for instance \cite{Zwillinger:97}) : 
$$\Phi_0(t) = \left(  \frac{c_1 + c_2}{c_1} \exp\left(c_1 \frac{\gamma_A}{\gamma_P + \gamma_A}(T-t)\right) - \frac{c_2}{c_1}\right)^{\frac{\gamma_P+\gamma_A}{\gamma_A}},$$
and the result follows. 
\end{proof}

\begin{proof}[\textbf{\scshape{Proof of Theorem \ref{theo:mainFB}}}]
The value function $v(t,x,y)=U_P(x-y) \Phi_0(t)$ is a classical solution to the HJB equation (\ref{eq:HJBFB}). A standard verification theorem yields  that $v=V$. 
Through Lemma \ref{lem:V0RSFB}, the optimal controls for the full Risk-Sharing problem are : 
$$a^*_t = \dfrac{1}{\kappa}, Z^*_t = \dfrac{\gamma_P}{\gamma_P + \gamma_A}\; \text{and} \; K^*_t = \dfrac{1}{\gamma_P + \gamma_A} \log(\Phi_0(t)),$$
with $\Phi_0$ as defined in Lemma \ref{lem:V0RSFB}. These controls are free of $y$ and it follows that~: 
$$V(0,x_0,y_0) = E\left[ U_P\left(X_T^{(x_0,a^*)} - W_T^{(y_0,a^*,Z^*,K^*)}\right)\right],$$
is a decreasing function of $y_0$. Thus we obtain 
$$\sup_{y_0\ge y_{PC}}V_0(0,x_0,y_0) = E\left[ U_P\left(X_T^{(x_0,a^*)} - W_T^{(y_{PC},a^*,Z^*,K^*)}\right)\right].$$
Finally, we observe that the optimal controls are bounded and thus Remark \eqref{boundedcontrol} yields $\pi^*=(y_{PC},a^*,Z^*,K^*) \in \Gamma$. As a consequence,
$$
\sup_{y_0\ge y_{PC}} V(0,x_0,y_0) =  \E\left[ U_P\left(X_T^{(x_0,a)} - W_T^{(y_{PC},a^*,Z^*,K^*)}\right)\right] \le V_P^{FB}.
$$
Because the reverse inequality holds, the final result follows. 
\end{proof}

\section{Optimal contracting under shutdown risk}
\label{sec:OC}

\subsection{Main results}

The following is dedicated to our main result for the Moral Hazard problem. We shall state our main theorem with the explicit optimal contract before turning to some analysis of the effect of the shutdown on dynamic contracting.
In the case of moral hazard, one is forced to make a stronger assumption about the nature of a contract. This stronger hypothesis will naturally appear to justify the martingale optimality principle. In our setting, a contract is a $\mathbb{G}_{T\wedge \tau}$ measurable random variable $W$  such that for every $\beta \in \R$,  we have
$$
\E\left[ \exp(\beta W)\right]<+\infty.
$$   

A first step to optimal contracting involves answering the preliminary question:  can we characterize incentive compatible wages and if so what is the related optimal action for the agent?
The characterization of incentive compatible contracts relies on the martingale optimality principle (see \cite{HIM:2005} and \cite{Rouge:2000}) that we recall below.
\begin{Lemma}[Martingale Optimality Principle]
\label{lem:MOP}
Given a contract $W$, consider a family of stochastic processes $R^{a}(W) := (R_t^a)_{t \in [0,T]}$ indexed by $a$ in $\mathcal{B}$ that satisfies : 
\begin{enumerate}
\item $R_T^a = U_A(W - \int_0^T \kappa(a_s(1-N_s)) ds)$ for any $a$ in $\mathcal{B}$
\item $R^a_.$ is a $\mathbb{P}^{a}$-supermartingale  for any $a$ in $\mathcal{B}$
\item $R^a_0$ is independent of $a$.
\item There exists $a^*$ in $\mathcal{B}$ such that $R^{a^*}$ is a  $\mathbb{P}^{a^*}$-martingale.
\end{enumerate}
Then,
$$
R_0^{a^*}=\E^{a^*}\left[ U_A(W - \int_0^T \kappa(a^*_s) ds)\right] \ge \E^{a}\left[ U_A(W - \int_0^T \kappa(a_s(1-N_s)) ds)\right],
$$
meaning that $a^*$ is the optimal agent’s action in response to the contract $W$. 
\end{Lemma}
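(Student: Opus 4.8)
The plan is simply to chain together the four listed properties of the family $(R^a)_{a \in \Bc}$, since the conclusion is a direct consequence of the supermartingale/martingale dichotomy; no auxiliary construction is required at this stage.

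First I would fix an arbitrary admissible action $a \in \Bc$ and exploit property 2: since $R^a$ is a $\P^a$-supermartingale, comparing the endpoints $t=0$ and $t=T$ gives $R_0^a \ge \E^a[R_T^a]$. Substituting the terminal identification from property 1 then yields
$$
R_0^a \ge \E^a\left[ U_A\left(W - \int_0^T \kappa(a_s(1-N_s))\,ds\right)\right].
$$
Next I would specialise to the candidate $a^*$ and use property 4 to upgrade this inequality to an equality: because $R^{a^*}$ is a genuine $\P^{a^*}$-martingale, $R_0^{a^*} = \E^{a^*}[R_T^{a^*}]$, and property 1 again identifies the right-hand side, giving
$$
R_0^{a^*} = \E^{a^*}\left[ U_A\left(W - \int_0^T \kappa(a^*_s(1-N_s))\,ds\right)\right].
$$

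Finally, property 3 states that $R_0^a$ does not depend on $a$, so $R_0^a = R_0^{a^*}$ for every $a \in \Bc$. Combining the two displays through this common initial value delivers exactly
$$
\E^{a^*}\left[ U_A\left(W - \int_0^T \kappa(a^*_s(1-N_s))\,ds\right)\right] = R_0^{a^*} \ge \E^a\left[ U_A\left(W - \int_0^T \kappa(a_s(1-N_s))\,ds\right)\right],
$$
which is precisely the asserted optimality of $a^*$ among admissible actions.

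The argument is entirely formal once the four hypotheses are granted, so there is no genuine obstacle internal to this lemma; the only point requiring a word of care is the legitimacy of the endpoint comparison, namely that each $R_T^a$ is $\P^a$-integrable so that $R_0^a \ge \E^a[R_T^a]$ is meaningful. This is guaranteed by the standing assumption on contracts that $\E[\exp(\beta W)] < +\infty$ for every $\beta \in \R$, together with the uniform bound on actions in $\Bc$ which renders the cost term $\int_0^T \kappa(a_s(1-N_s))\,ds$ bounded. The real work in this framework therefore lies not in the lemma itself but in subsequently constructing a concrete family $(R^a)_{a \in \Bc}$ satisfying properties 1--4, which is the task taken up in the sequel.
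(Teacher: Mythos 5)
Your proof is correct and is exactly the standard chaining argument (supermartingale inequality at the endpoints, upgraded to equality for $a^*$, glued by the common initial value); the paper itself omits the proof and simply recalls the lemma from the cited references, where the same argument appears. Your added remark on integrability of $R_T^a$ is the right point of care and is consistent with the paper's standing exponential-moment assumption on contracts.
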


We  will construct such a family following the standard route. Consider a given contract $W$, we define the family $R^{a}(W) := (R_t^a)_{t \in [0,T]}$ by
$$
 R_t^a := -\exp\left(-\gamma_A \left( Y_t(W) - \int_0^t \kappa(a_s(1-N_s)) ds \right)\right),
$$
where $(Y(W),Z(W),K(W))$ in $(\mathbb{S}^2\times\mathbb{H}^2\times \mathbb{H}^2)$ is the unique solution of the following BSDE under $\P^0$
\begin{equation}
\label{BSDESB}
Y_t(W)=W-\int_{t}^{T} f(Z_s(W),K_s(W))(1-N_s)\,ds-\int_{t}^{T} Z_s(W)(1-N_s)\,dB_s-\int_{t}^{T} K_s(W)(1-N_s)\,dM_s,
\end{equation}
with
$$
f(z,k) :=  \frac{1}{2} \gamma_A  z^2 + \lambda k  +  \frac{\lambda}{\gamma_A}(e^{-\gamma_A k}-1) + \inf_{a \in \mathcal{B}} \left\{ \kappa(a) - a z \right\}.
$$
\begin{Remark} The theoretical justification of the well-posedness of the BSDE \eqref{BSDESB} deserves some comments. The first results were obtained in \cite{KLN:13} and \cite{JMPR:15} when the contract $W$ is assumed to be bounded.
The necessary extension in our model when $W$ admits an exponential moment has been treated recently in the paper \cite{Martin2020}.
\end{Remark}

By construction, $R_T^a = U_A(W - \int_0^T \kappa(a_s(1-N_s)) ds)$ for any $a$ in $\mathcal{B}$. Moreover, $R^a_0=Y_0(W)$ is independent of the agent’s action $a$.
We compute the variations of $R^a$ and obtain :  
\begin{align*}
&=- \gamma_A R_s^a Z_s (1-N_s)dB_s + R_s^a(e^{-\gamma_A K_s}-1)(1-N_s) dM_s\\
&+ R_s^a \gamma_A \left\{ \frac{1}{2} \gamma_A  Z_s^2 -  f(Z_s,K_s) + \kappa(a_s(1-N_s)) + \lambda K_s  +  \frac{\lambda}{\gamma_A}(e^{-\gamma_A K_s}-1) \right\}(1-N_s)ds.\\
&=- \gamma_A R_s^a Z_s(1-N_s) dB_s^a + R_s^a(e^{-\gamma_A K_s}-1)(1-N_s) dM_s\\
&+ R_s^a \gamma_A \left\{ \frac{1}{2} \gamma_A  Z_s^2 -  f(Z_s,K_s) + \kappa(a_s(1-N_s)) + \lambda K_s  +  \frac{\lambda}{\gamma_A}(e^{-\gamma_A K_s}-1) -a_s Z_s \right\}(1-N_s)ds.
\end{align*}
Thus $R^a$ is a $\mathbb{P}^{a}$-super-martingale for every $a$ in $\mathcal{B}$, the function
$$
a^*(z)=-A\ind_{z\le -\kappa A}+\frac{z}{\kappa}\ind_{-\kappa A \le z\le \kappa A}+A\ind_{z\ge \kappa A}
$$
is a unique minimizer for $f$ and $R^{a^*}$ is a $\mathbb{P}^{a^*}$-martingale. As a consequence, every contract $W$ is incentive compatible which a unique best reply $a^*(Z(W))$. Finally, a contract $W$ satisfies the participation constraint if
and only if $Y_0(W) \ge y_{PC}$.\\

Relying on the idea of Sannikov \cite{Sannikov08} and its recent theoretical justification by Cvitanic, Possamai and Touzi \cite{CPT:18}, we will consider the agent promised wage $Y(W)$ as a state variable to embed the principal’s problem into the class of Markovian problems, by considering the sensitivities of the agent’s promised wage $Z(W)$ and $K(W)$ as control variables. For $\pi=(y_0,Z,K) \in [y_{PC} ; + \infty) \times \mathbb{H}^2 \times  \mathbb{H}^2$, we define
under $\P^0$, the control process called the agent continuation value
\begin{equation}
 \label{eq:wageprocess2}
 W^{(y_0,Z,K)}_t =  y_0 +  \int_0^t Z_s (1-N_s)dB_s + \int_0^ t K_s (1-N_s)dM_s + \int_0^t f(Z_s,K_s)(1-N_s)\, ds.
 \end{equation}
Under $\P^*:=\P^{(a^*(Z))}$, we thus have
\begin{align}\label{eq:wageprocess2bis}
W^{(y_0,Z,K)}_t &=  y_0 +  \int_0^t Z_s(1-N_s) dB_s^* + \int_0^ t K_s(1-N_s) dM_s \\ \nonumber
&+ \int_0^t \left\{ \frac{\gamma_A}{2}   Z_s^2 + \kappa(a^*(Z_s))+ \frac{\lambda}{\gamma_A} [\exp(-\gamma_A K_s) - 1 + \gamma_A K_s ] \right\} (1-N_s)ds \\ \nonumber
&=  y_0 +  \int_0^t Z_s(1-N_s) dB_s^* + \int_0^ t K_s(1-N_s) dN_s \\ 
&+ \int_0^t \left\{ \frac{\gamma_A}{2}   Z_s^2 + \kappa(a^*(Z_s))+ \frac{\lambda}{\gamma_A} [\exp(-\gamma_A K_s) - 1 ] \right\} (1-N_s)ds  \nonumber
\end{align}
Now, we consider the set
$$
\zeta = \left\{ \pi=(Z,K) \in  \mathbb{H}^2 \times  \mathbb{H}^2 \text{ such that } \forall \beta \in \R,\,\E\left[  \exp(\beta W_T^{(y,Z,K)})  \right]<+\infty \text{ for }y \in \R \right\}.
$$
By construction, $W^{(y,\pi)}_{T}$ is a contract  that satisfies the participation constraint for every $\pi \in \zeta$ and $y \ge y_{PC}$. Moreover, by the well-posedness of the BSDE \eqref{BSDESB} , every contract $W$ that satisfies the participation constraint can be written $W^{(Y_0(W),Z(W),K(W))}_{T}$
with $\pi(W)=(Z(W),K(W)) \in \zeta$. Therefore, the problem of the principal can now be rewritten as the following optimisation problem
$$
V_P:=\sup_{y \ge y_{pc}} v(0,x,y),
$$
where

\begin{equation}
\label{MarkovSB}
v(0,x,y)=\sup_{\pi \in \zeta} \E^*\left[U_P(X_{T\wedge\tau}-W_{T\wedge\tau}^\pi) \right]
\end{equation}

To characterize the optimal contract, we will proceed analogously as in the full risk sharing case by constructing a smooth solution to the HJB equation associated to the Markov control problem \eqref{MarkovSB} given by

\begin{align}
\label{eq:HJB2}
0=\partial_t v(t,x,y) + \inf_{Z,K} \left\{ \partial_xv(t,x,y)\dfrac{Z}{\kappa} + \partial_yv(t,x,y) \left[\frac{\gamma_A}{2} Z^2 + \kappa(a^*(Z)) + \frac{\lambda}{\gamma_A} [\exp(-\gamma_A K) - 1]\right] \right. \nonumber\\
\left.+ \lambda \left[ U_P(x-y-K) - v(t,x,y)\right]+ \partial_{yy} v_0(t,x,y) \frac{Z^2}{2} + \frac{1}{2} \partial_{xx} v(t,x,y)+ \partial_{xy}v(t,x,y) Z 
 \right\},
\end{align}

\begin{Lemma}
\label{lem:temp2}
Assume the constant $A$ in the definition of the set of admissible efforts ${\cal B}$ satisfies 
$$
A > \dfrac{\gamma_P + \kappa^{-1}}{\kappa(\gamma_P + \gamma_A) + 1}\,.
$$
Then, the function $U_P(x-y) \Phi_0(t),$
with 
$$\Phi_0(t) = \left(  \frac{c_1 + c_2}{c_1} \exp\left(c_1 \frac{\gamma_A}{\gamma_P + \gamma_A}(T-t)\right) - \frac{c_2}{c_1}\right)^{\frac{\gamma_P+\gamma_A}{\gamma_A}},$$
where 
$$  c_1 = \frac{\gamma_P^2 \gamma_A}{2(\gamma_P+\gamma_A + \kappa^{-1}) } - \frac{\gamma_P \kappa^{-1}(\gamma_P+\kappa^{-1})}{2{(\gamma_P + \gamma_A + \kappa^{-1})}} - \lambda \frac{\gamma_P + \gamma_A}{\gamma_A} \quad \text{and} \quad c_2 = \lambda \frac{\gamma_P + \gamma_A}{\gamma_A}.$$
solves in the classical sense the HJB equation (\ref{eq:HJB2}). In particular $Z^*_t = \dfrac{\gamma_P + \kappa^{-1}}{\gamma_P + \gamma_A + \kappa^{-1}}$ and $K^*_t = \dfrac{1}{\gamma_P + \gamma_A} \log(\Phi_0(t)),$
\end{Lemma}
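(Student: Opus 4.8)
The plan is to mirror almost verbatim the proof of Lemma \ref{lem:V0RSFB} for the first-best problem, the only structural difference being that the agent's action is now slaved to the sensitivity $Z$ through the incentive-compatible best reply $a^*(Z)=Z/\kappa$, valid on the interior branch. First I would posit the separable ansatz $v(t,x,y)=U_P(x-y)\Phi_0(t)$ with $\Phi_0>0$ to be determined, and exploit the exponential structure of $U_P$ to compute every derivative explicitly: $\partial_x v=-\gamma_P v$, $\partial_y v=\gamma_P v$, $\partial_{xx}v=\partial_{yy}v=\gamma_P^2 v$, $\partial_{xy}v=-\gamma_P^2 v$, together with the identity $U_P(x-y-K)=U_P(x-y)e^{\gamma_P K}$.

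Substituting these into the HJB equation (\ref{eq:HJB2}) and factoring out $U_P(x-y)$, the PDE collapses to a scalar ODE for $\Phi_0$ of the form $\Phi_0'(t)+\inf_{Z,K}\{G(Z,K;\Phi_0(t))\}=0$ with terminal condition $\Phi_0(T)=1$; the negativity of $U_P(x-y)$ together with the positivity of $\Phi_0$ render the reduced extremization a genuine minimization over $(Z,K)$, exactly as in Lemma \ref{lem:V0RSFB}, and on the interior branch one has $\kappa(a^*(Z))=Z^2/(2\kappa)$. The inner minimization separates: the objective is strictly convex quadratic in $Z$ and strictly convex in $K$ through the exponential terms, so the first-order conditions pin down the unique minimizer. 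Carrying them out gives $Z^*=\frac{\gamma_P+\kappa^{-1}}{\gamma_P+\gamma_A+\kappa^{-1}}$ and, from the stationarity relation $\Phi_0\,e^{-\gamma_A K}=e^{\gamma_P K}$, the value $K^*=\frac{1}{\gamma_P+\gamma_A}\log\Phi_0(t)$.

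The assumption on $A$ enters here, and this is where I expect the genuine obstacle to lie. The unconstrained minimizer $Z^*$ corresponds to an effort $a^*(Z^*)=Z^*/\kappa=\frac{\gamma_P+\kappa^{-1}}{\kappa(\gamma_P+\gamma_A)+1}$, and the hypothesis $A>\frac{\gamma_P+\kappa^{-1}}{\kappa(\gamma_P+\gamma_A)+1}$ is precisely the statement that this effort lies in the interior $(-A,A)$. This is what legitimises using the interior branch $a^*(Z)=Z/\kappa$ and $\kappa(a^*(Z))=Z^2/(2\kappa)$ when minimising: had $Z^*$ landed in a saturated region, the best reply would clamp to $\pm A$ and the drift and cost terms would change, so the whole first-order computation would have to be redone on that region. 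Verifying that the interior critical point is the global minimiser, given the convexity of the objective, is therefore the crux of the argument.

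Finally I would substitute $(Z^*,K^*)$ back into $G$. Using $e^{\gamma_P K^*}=\Phi_0^{\gamma_P/(\gamma_P+\gamma_A)}$ and $e^{-\gamma_A K^*}=\Phi_0^{-\gamma_A/(\gamma_P+\gamma_A)}$, the jump terms collapse to $c_2\big(\Phi_0^{\gamma_P/(\gamma_P+\gamma_A)}-\Phi_0\big)$ with $c_2=\lambda\frac{\gamma_P+\gamma_A}{\gamma_A}$, while the $Z$-terms together with $\tfrac12\partial_{xx}v$ contribute a multiple of $\Phi_0$ equal to $\frac{\gamma_P^2}{2}-\frac{\gamma_P(\gamma_P+\kappa^{-1})^2}{2(\gamma_P+\gamma_A+\kappa^{-1})}$, which algebraically simplifies to exactly $c_1+c_2$ for the stated $c_1$. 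This produces the Bernoulli equation $\Phi_0'+c_1\Phi_0+c_2\Phi_0^{\gamma_P/(\gamma_P+\gamma_A)}=0$ with $\Phi_0(T)=1$, whose explicit solution (cited from \cite{Zwillinger:97}, as in Lemma \ref{lem:V0RSFB}) is the claimed $\Phi_0$. It then remains only to confirm that this $\Phi_0$ is smooth and strictly positive on $[0,T]$, consistent with the ansatz: this is immediate when $c_1>0$ since the base of the power stays above $1$, and reduces to elementary sign bookkeeping on the base otherwise.
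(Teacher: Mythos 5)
Your proposal is correct and takes essentially the same route as the paper: the paper's own proof is a one-line reference to the first-best computation of Lemma \ref{lem:V0RSFB}, noting only that the hypothesis on $A$ forces $a^*(z)=z/\kappa$, and your argument is exactly that adaptation carried out in detail, with the right first-order conditions, the identification of the $Z$-terms with $c_1+c_2$, and the resulting Bernoulli ODE. Your point that the bound on $A$ is precisely what keeps $a^*(Z^*)=\frac{\gamma_P+\kappa^{-1}}{\kappa(\gamma_P+\gamma_A)+1}$ on the interior branch is the same (slightly more explicit) justification the paper gives.
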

\begin{proof}
Because the assumption on $A$ implies $a^*(z)=z/\kappa$, the proof of this lemma is a direct adaptation of the proof of Lemma \ref{lem:V0RSFB} to which we refer the reader. 
\end{proof}

We are in a position to prove the main result of this section

\begin{Theorem}
\label{theo:main1}
We have the following explicit characterizations of the optimal contracts. Let $A$ as in the Lemma \ref{lem:temp2} and
let $Z^*_t = \dfrac{\gamma_P + \kappa^{-1}}{\gamma_P + \gamma_A + \kappa^{-1}}$ and 
$ K^*_t = \frac{1}{\gamma_P + \gamma_A} \log(\Phi_0(t)),$
where $\Phi_0$ is defined as in (\ref{eq:phi}) with the constants :  
$$  c_1 := \frac{\gamma_P^2 \gamma_A}{2(\gamma_P+\gamma_A + \kappa^{-1}) } - \frac{\gamma_P \kappa^{-1} (\gamma_P+\kappa^{-1})}{2{(\gamma_P + \gamma_A + \kappa^{-1})}} - \lambda \frac{\gamma_P + \gamma_A}{\gamma_A} \quad \text{and} \quad c_2 := \lambda \frac{\gamma_P + \gamma_A}{\gamma_A}.\\$$

Then $(y_{PC},Z^*,K^*)$ parametrizes the optimal wage for the Moral Hazard problem. The Agent performs the optimal action $\frac{Z^*}{\kappa}$. 

\end{Theorem}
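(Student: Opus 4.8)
The plan is to follow verbatim the architecture of the proof of Theorem~\ref{theo:mainFB}, now starting from the reduction carried out via the martingale optimality principle. By Lemma~\ref{lem:MOP} and the reformulation preceding \eqref{MarkovSB}, the principal's problem reads $V_P=\sup_{y\ge y_{PC}}v(0,x,y)$, where $v(0,x,y)$ is the value of the Markov control problem \eqref{MarkovSB} over $\pi=(Z,K)\in\zeta$, the wage state evolving according to \eqref{eq:wageprocess2bis}. Lemma~\ref{lem:temp2} already supplies a classical solution $\bar v(t,x,y)=U_P(x-y)\Phi_0(t)$ of the associated HJB equation \eqref{eq:HJB2}, together with the pointwise optimizers $Z^*$ and $K^*$. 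It thus remains to (i) identify $\bar v$ with the value $v$ by a verification theorem, (ii) check that the candidate controls are admissible, i.e. $(Z^*,K^*)\in\zeta$, (iii) optimize the outer variable $y$, and (iv) read off the agent's best reply.

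For step (i) I would recast \eqref{MarkovSB} in the explicit default-averaged form used for \eqref{StochasticControlFB} --- isolating the no-default terminal payoff $U_P(X_T-W_T^\pi)(1-N_T)$ and the default contribution integrated against the density $\lambda e^{-\lambda t}$ --- so that $\bar v(t,x,y)=U_P(x-y)$ is exactly the frozen post-default value (indeed $\Phi_0(T)=1$). Applying It\^o's formula to $\bar v(t,X_t,W_t^\pi)$ on $[0,T\wedge\tau]$ and using that $\bar v$ solves \eqref{eq:HJB2} makes this process a $\P^*$-supermartingale for every admissible $\pi$ and a $\P^*$-martingale at $(Z^*,K^*)$; taking expectations gives $\bar v\ge v$, with equality attained by the optimizers, hence $\bar v=v$, provided the $dB^*$- and $dM$-stochastic integrals are genuine martingales under $\P^*$ rather than merely local ones.

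That integrability is precisely what membership in $\zeta$ guarantees, and is the crux of step (ii). Here $Z^*$ is a constant and $K^*_t=\frac{1}{\gamma_P+\gamma_A}\log\Phi_0(t)$ is bounded on $[0,T]$ because $\Phi_0$ is continuous and strictly positive there (Lemma~\ref{lem:temp2}); consequently, under $\P^*$, $W_T^{\pi^*}$ is a constant plus a Brownian integral with bounded integrand, a bounded drift and a single bounded jump, so $\E^*[\exp(\beta W_T^{\pi^*})]<+\infty$ for every $\beta\in\R$, exactly as in the Gronwall argument of Remark~\ref{boundedcontrol}. Thus $(Z^*,K^*)\in\zeta$. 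For step (iii), the optimizers do not depend on $y$, so $v(0,x,y)=U_P(x-y)\Phi_0(0)=-\exp(-\gamma_P(x-y))\Phi_0(0)$ is strictly decreasing in $y$, whence the outer supremum over $y\ge y_{PC}$ is attained at $y=y_{PC}$.

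Finally, for step (iv), the optimal contract is $W^{(y_{PC},Z^*,K^*)}_T$, and the incentive-compatibility analysis following Lemma~\ref{lem:MOP} identifies the agent's unique best reply as $a^*(Z^*)$. The hypothesis on $A$ in Lemma~\ref{lem:temp2} is exactly $A>\frac{\gamma_P+\kappa^{-1}}{\kappa(\gamma_P+\gamma_A)+1}=Z^*/\kappa$, which places $Z^*$ strictly inside the linear regime of $a^*(\cdot)$, so $a^*(Z^*)=Z^*/\kappa$, as claimed. The main obstacle is the verification step: one must rule out that the non-compact control set $\zeta$ and the exponential growth of $U_P$ break either the supermartingale comparison $\bar v\ge v$ or the true-martingale property at the optimum --- which is why $\zeta$ is defined through exponential moments of all orders and why the boundedness of the candidate controls is decisive.
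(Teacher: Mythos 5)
Your proposal is correct and follows essentially the same route as the paper's proof: invoke the classical HJB solution from Lemma \ref{lem:temp2}, verify as in Theorem \ref{theo:mainFB} using that the optimal controls are bounded and free of $y$, confirm admissibility by checking that $W^{(y_{PC},Z^*,K^*)}_T$ has exponential moments of all orders (a constant $Z^*$, a deterministic bounded $K^*$, and a Brownian integral with bounded integrand), and conclude with the outer optimization at $y=y_{PC}$ and the best reply $a^*(Z^*)=Z^*/\kappa$. Your treatment is if anything more explicit than the paper's on the verification and admissibility steps, but it is the same argument.
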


\begin{proof}
Because the function $U_P(x-y) \Phi_0(t)$ is a classical solution to the HJB equation (\ref{eq:HJB2}) and the optimal controls are bounded and free of $y$, we proceed analogously as in the proof of Theorem \ref{theo:mainFB}.
Finally, we have to prove that the optimal wage $W^*=Y_T^{(y_{PC},Z^*,K^*)}$ admits exponential moments to close the loop. According to \eqref{eq:wageprocess2bis}, we have
$$
W^*=y_{PC}+Z^*B^*_{T\wedge \tau}+\frac{1}{2}\left(\gamma_A+\frac{1}{\kappa}\right)(Z^*)^2 (T\wedge \tau )+K^*_{\tau_{-}} \textbf{1}_{\tau \leq T}  + \int_0^T  \frac{\lambda}{\gamma_A} [\exp(-\gamma_A K^*_s) - 1 ](1-N_s) ds. 
$$
Because $(B^*_t)_t$ is a Brownian motion and $K^*_t$ is deterministic, it is straightforward to check that $W^*$ admits exponential moments.
\end{proof}

\subsection{Model analysis}

The optimal contract includes two components. One is  linear in the output with an incentivizing slope  that is similar to the classical optimal contract found in \cite{HM}. This is necessary to implement a desirable level of effort. The other  is unrelated to the incentives but linked to the shutdown risk sharing. It is key to observe that this second term is nonzero even if the shutdown risk does not materialize before the termination of the contract. \\
The characterization of the optimal contracts in Theorem \ref{theo:main1} sparks an immediate observation:  the two parties only need to be committed to the contracting agreement up until $T \wedge \tau$.  Therefore in this simple model, using an expected-utility related reasoning and without considering mechanisms such as employment law, the occurence of the agency-free external risk, halting production, leads to early contract terminations. This is in line with what actually happened during the Covid pandemic. Indeed in the USA and in eight weeks of the pandemic, 36.5 million people applied for unemployment insurance. In more protective economies, mass redundancies were only prevented through the instauration of furlough type schemes allowing private employees' wages to temporarily be paid by gouvernements. This phenomena makes fundamental sense : a principal whose output process is completely halted cannot enforce the agent to work hard because she has no revenue to provide the incentives. Let's focus on the second term:   

\begin{equation}
\label{extraK}
 K^*_{\tau_{-}} \textbf{1}_{\tau \leq T}  + \int_0^T  \frac{\lambda}{\gamma_A} [\exp(-\gamma_A K^*_s) - 1 ](1-N_s) ds, 
\end{equation}

Understanding the effect of these extra terms is crucial to fully understand the sharing of the agency-free shutdown risk.
First, we show that the sign of the control $K^*$ is constant.

\begin{Lemma}
\label{lemma:K}
Let $c_1$ and $c_2$ be the relevant constants given in Theorem \ref{theo:main1} then the optimal control $(K^*_t)_{t \in [0,T]}$ can be expressed as : 
\begin{equation}\label{Kversion2}
 K_t^* =  \frac{1}{\gamma_A} \log\left(\E\left[\exp\left(\frac{\gamma_A}{\gamma_P+\gamma_A}(c_1+c_2)((T-t) \wedge \tau)\right)\right]\right)\quad t \in [0,T]. 
 \end{equation}
\end{Lemma}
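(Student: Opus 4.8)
The plan is to prove the identity \eqref{Kversion2} by directly evaluating the expectation on its right-hand side and matching the resulting closed form with the definition \eqref{eq:phi} of $\Phi_0$. The whole argument is an explicit computation, the only real input being that a \emph{constant} intensity $\lambda$ forces $\tau$ to be exponentially distributed.

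First I would set $s:=T-t\ge 0$ and abbreviate $\alpha:=\frac{\gamma_A}{\gamma_P+\gamma_A}(c_1+c_2)$, so that the claimed right-hand side reads $\frac{1}{\gamma_A}\log\E\!\left[\exp(\alpha\,(s\wedge\tau))\right]$. Since the compensator is $\Lambda_u=\lambda u$ with $\lambda$ constant, the default time has survival function $\P(\tau>u)=e^{-\lambda u}$ and density $\lambda e^{-\lambda u}$ on $[0,\infty)$, consistently with the relation $\P(\tau\le T)=1-e^{-\lambda T}$ recorded in the model section. Conditioning on $\{\tau\le s\}$ and $\{\tau>s\}$, on which $s\wedge\tau$ equals $\tau$ and $s$ respectively, gives
$$
\E\!\left[\exp(\alpha\,(s\wedge\tau))\right]=\int_0^s e^{\alpha u}\,\lambda e^{-\lambda u}\,du+e^{\alpha s}e^{-\lambda s}.
$$
Assuming $c_1\neq 0$, equivalently $\alpha\neq\lambda$ (the degenerate case $c_1=0$ then follows by continuity of both sides in the parameters), the integral is $\frac{\lambda}{\alpha-\lambda}\bigl(e^{(\alpha-\lambda)s}-1\bigr)$, and collecting terms with $\frac{\lambda}{\alpha-\lambda}+1=\frac{\alpha}{\alpha-\lambda}$ yields
$$
\E\!\left[\exp(\alpha\,(s\wedge\tau))\right]=\frac{\alpha}{\alpha-\lambda}\,e^{(\alpha-\lambda)s}-\frac{\lambda}{\alpha-\lambda}.
$$

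The final step is the bookkeeping with the three constants. From $c_2=\lambda\frac{\gamma_P+\gamma_A}{\gamma_A}$ one reads off $\frac{\gamma_A}{\gamma_P+\gamma_A}c_2=\lambda$, hence $\alpha=\frac{\gamma_A}{\gamma_P+\gamma_A}c_1+\lambda$ and $\alpha-\lambda=\frac{\gamma_A}{\gamma_P+\gamma_A}c_1$. Substituting, $\frac{\alpha}{\alpha-\lambda}=\frac{c_1+c_2}{c_1}$ and $\frac{\lambda}{\alpha-\lambda}=\frac{c_2}{c_1}$, so that
$$
\E\!\left[\exp(\alpha\,(s\wedge\tau))\right]=\frac{c_1+c_2}{c_1}\exp\!\left(c_1\tfrac{\gamma_A}{\gamma_P+\gamma_A}(T-t)\right)-\frac{c_2}{c_1}=\Phi_0(t)^{\frac{\gamma_A}{\gamma_P+\gamma_A}},
$$
the last equality being precisely \eqref{eq:phi}. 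Taking $\frac{1}{\gamma_A}\log$ of both sides turns the right-hand side into $\frac{1}{\gamma_P+\gamma_A}\log\Phi_0(t)=K^*_t$, which is exactly \eqref{Kversion2}.

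There is no serious obstacle here: the proof is a routine evaluation of the Laplace-type transform of $s\wedge\tau$, and the points deserving a line of care are only the identification of the exponential law from the constant intensity, the handling of $c_1=0$ by continuity, and the matching of coefficients via $\frac{\gamma_A}{\gamma_P+\gamma_A}c_2=\lambda$. The payoff is immediate for the surrounding discussion: since $s\wedge\tau\ge 0$, the integrand $\exp(\alpha\,(s\wedge\tau))$ lies above (resp. below) $1$ according to the sign of $\alpha$, so $K^*_t$ inherits the constant sign of $c_1+c_2$, which is what the representation \eqref{Kversion2} is invoked to establish.
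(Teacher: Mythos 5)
Your proof is correct and follows essentially the same route as the paper: both evaluate $\E\left[\exp\left(\frac{\gamma_A}{\gamma_P+\gamma_A}(c_1+c_2)((T-t)\wedge\tau)\right)\right]$ by splitting over $\{\tau>T-t\}$ and $\{\tau\le T-t\}$, use the exponential law of $\tau$, and match coefficients through $\frac{\gamma_A}{\gamma_P+\gamma_A}c_2=\lambda$ to identify the result with $\Phi_0(t)^{\gamma_A/(\gamma_P+\gamma_A)}$. Your explicit remark on handling the degenerate case $c_1=0$ by continuity is a small point of care the paper leaves implicit.
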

\begin{proof}
We have that : 
$$K^*_t = \dfrac{1}{\gamma_P + \gamma_A} \log(\Phi_0(t)) ,$$ 
with $$\Phi_0(t) = \left(  \frac{c_1 + c_2}{c_1} \exp\left(c_1 \frac{\gamma_A}{\gamma_P + \gamma_A}(T-t)\right) - \frac{c_2}{c_1}\right)^{\frac{\gamma_P+\gamma_A}{\gamma_A}}.$$
The aim here is to link this expression for $\Phi_0$ to that of an expected value. As such, we consider the following expected value that decomposes as shown  : 
\begin{align*}
&\E\left[\exp\left(\frac{\gamma_A}{\gamma_P+\gamma_A}(c_1+c_2)((T-t) \wedge \tau)\right)\right] \\
&= \E\left[ \exp\left(\frac{\gamma_A}{\gamma_P+\gamma_A}(c_1+c_2)(T-t) \right) \textbf{1}_{\tau > T-t}\right]+ \E\left[ \exp\left(\frac{\gamma_A}{\gamma_P+\gamma_A}(c_1+c_2)\tau \right) \textbf{1}_{\tau \leq T-t}\right].\\
\end{align*}
Using $c_2 = \dfrac{\gamma_P + \gamma_A}{\gamma_A} \lambda$, the first term of the expected value rewrites as follows : 
\begin{align*} \E\left[ \exp\left(\frac{\gamma_A}{\gamma_P+\gamma_A}(c_1+c_2)(T-t) \right) \textbf{1}_{\tau > T-t}\right] 
&=  \exp\left(\frac{\gamma_A}{\gamma_P+\gamma_A}(c_1+c_2)(T-t) \right) \exp\left(- \lambda(T-t) \right)  \\
&=   \exp\left(c_1 \frac{\gamma_A}{\gamma_P+\gamma_A} (T-t) \right).
\end{align*}
It remains to compute the second term. We obtain :  
\begin{align*}
 \E\left[ \exp\left(\frac{\gamma_A}{\gamma_P+\gamma_A}(c_1+c_2)\tau \right) \textbf{1}_{\tau \leq T-t}\right] &= \int_0^{T-t} \lambda \exp\left( \frac{\gamma_A}{\gamma_P+\gamma_A}(c_1+c_2)s            \right)  \exp\left(- \lambda s\right) ds\\
 &=\int_0^{T-t} \lambda \exp\left( \frac{\gamma_A}{\gamma_P+\gamma_A}(c_1+c_2)s            - \lambda s\right) ds\\
 &= \int_0^{T-t} \lambda \exp\left( \frac{\gamma_A}{\gamma_P+\gamma_A} c_1 s           \right) ds\\
 &= \left[ \frac{\lambda}{c_1}\frac{\gamma_P + \gamma_A}{\gamma_A}  \exp\left( \frac{\gamma_A}{\gamma_P+\gamma_A} c_1 s           \right)\right]_0^{T-t}\\
  &= \left[ \frac{c_2}{c_1}  \exp\left( \frac{\gamma_A}{\gamma_P+\gamma_A} c_1 s           \right)\right]_0^{T-t}\\
 &= \frac{c_2}{c_1}   \exp\left( c_1 \frac{\gamma_A}{\gamma_P+\gamma_A} (T-t)           \right) -  \frac{c_2}{c_1} . 
\end{align*}
Combining both terms we reach the final expression : 
\begin{align*}
\E\left[\exp\left(\frac{\gamma_A}{\gamma_P+\gamma_A}(c_1+c_2)((T-t) \wedge \tau)\right)\right]  
&=  \frac{c_1 + c_2}{c_1} \exp\left(c_1 \frac{\gamma_A}{\gamma_P + \gamma_A}(T-t)\right) - \frac{c_2}{c_1}. 
\end{align*}
Therefore we identify that : 
$$ \Phi_0(t) = \left(\E\left[\exp\left(\frac{\gamma_A}{\gamma_P+\gamma_A}(c_1+c_2)((T-t) \wedge \tau)\right)\right]\right)^{\frac{\gamma_P+\gamma_A}{\gamma_A}}.$$
As a consequence, we may also rewrite $K^*_t.$ Indeed : 
 $$ K^*_t = \frac{1}{\gamma_P + \gamma_A} \log(\Phi_0(t)) ,$$
and with the new expression for $\Phi_0$ we obtain the result : 
$$ K_t^* =  \frac{1}{\gamma_A} \log\left(\E\left[\exp\left(\frac{\gamma_A}{\gamma_P+\gamma_A}(c_1+c_2)((T-t) \wedge \tau)\right)\right]\right).$$
\end{proof}

\begin{Remark}
We have the same expression for the optimal control $K^*$ in the first-best case, using for $c_1$ and $c_2$ the relevant constants given in Theorem \ref{theo:mainFB}.
\end{Remark}
As a consequence, this alternative form for $K^*$ leads to easy analysis of the sign of the control, given in the following lemma. 

\begin{Lemma}
\label{lemma:signK}The sign of $K^*$ over the contracting period $[0,T]$ is constant and entirely determined by the model's risk aversions $\gamma_P$ and $\gamma_A$, and the Agent's effort cost $\kappa.$ Indeed, 
the sign of $K^*$ is equal to the sign of  $\gamma_P \gamma_A - \gamma_P \kappa^{-1} - (\kappa^{-1})^2.$  
Moreover, $K^*_t$ varies monotonously in time, with $K^*_T = 0.$
\end{Lemma}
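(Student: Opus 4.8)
The plan is to work directly from the probabilistic representation of $K^*$ obtained in Lemma \ref{lemma:K} rather than from the explicit formula for $\Phi_0$. Writing $\alpha := \frac{\gamma_A}{\gamma_P + \gamma_A}(c_1 + c_2)$, that lemma gives
$$
K_t^* = \frac{1}{\gamma_A}\log\left(\E\left[\exp\left(\alpha\,((T-t)\wedge\tau)\right)\right]\right), \quad t \in [0,T].
$$
The first step is a purely algebraic simplification of $c_1 + c_2$. Using the constants from Theorem \ref{theo:main1}, the terms carrying $\lambda$ cancel and one finds
$$
c_1 + c_2 = \frac{\gamma_P\left(\gamma_P\gamma_A - \gamma_P\kappa^{-1} - (\kappa^{-1})^2\right)}{2(\gamma_P + \gamma_A + \kappa^{-1})}.
$$
Since $\gamma_P > 0$ and $\gamma_P + \gamma_A + \kappa^{-1} > 0$, the sign of $c_1 + c_2$ ---and hence of $\alpha$, because $\frac{\gamma_A}{\gamma_P+\gamma_A} > 0$--- equals the sign of $\gamma_P\gamma_A - \gamma_P\kappa^{-1} - (\kappa^{-1})^2$, which is the quantity announced in the statement.

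Next I would exploit the positivity of $\tau$. Because the intensity is constant, $\tau$ is exponentially distributed and in particular $\tau > 0$ almost surely; thus for every $t < T$ we have $(T-t)\wedge\tau > 0$ almost surely, whereas at $t = T$ the random variable $(T-t)\wedge\tau$ vanishes identically. Consequently $\E[\exp(\alpha((T-t)\wedge\tau))] > 1$ when $\alpha > 0$ and $< 1$ when $\alpha < 0$, for every $t < T$. Since the logarithm is increasing and vanishes at $1$, the sign of $K_t^*$ is constant on $[0,T)$ and equal to the sign of $\alpha$, yielding the first claim. The boundary value $K_T^* = 0$ is then immediate, as the expectation reduces to $\E[\exp(0)] = 1$.

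For the monotonicity in time, the key observation is that the map $t \mapsto (T-t)\wedge\tau$ is non-increasing on $[0,T]$ pathwise. Composing with $x \mapsto \exp(\alpha x)$ ---which is increasing if $\alpha > 0$ and decreasing if $\alpha < 0$--- and taking expectations, the map $t \mapsto \E[\exp(\alpha((T-t)\wedge\tau))]$ is monotone on $[0,T]$, with direction governed by the sign of $\alpha$. As $\log$ is increasing, $K^*$ inherits this monotonicity: it decreases from a positive value down to $0$ when $\alpha > 0$, increases from a negative value up to $0$ when $\alpha < 0$, and is identically $0$ when $\alpha = 0$.

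There is no genuine obstacle here; the argument is elementary once Lemma \ref{lemma:K} is invoked. The only points requiring care are the algebraic cancellation in $c_1 + c_2$ and the almost-sure strict positivity of $(T-t)\wedge\tau$ for $t < T$, which rests on $\tau$ having no atom at the origin. The representation does all the real work, reducing both the sign and the monotonicity statements to the elementary monotonicity of $\exp$ and $\log$.
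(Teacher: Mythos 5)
Your proof is correct and follows essentially the same route as the paper: both start from the probabilistic representation of $K^*$ in Lemma \ref{lemma:K}, reduce everything to the sign of $c_1+c_2$, and check by direct algebra that this sign equals that of $\gamma_P\gamma_A-\gamma_P\kappa^{-1}-(\kappa^{-1})^2$. You merely spell out the elementary positivity and pathwise-monotonicity arguments that the paper's proof leaves implicit under ``we easily deduce.''
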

\begin{proof}
From the expression \eqref{Kversion2}, we easily deduce that :
\begin{itemize}
\item If $c_1+c_2=0$ then $K^*_t=0$ for every $t \in [0,T]$,
\item if $c_1+c_2>0$ then $K^*_t >0$ for $t \le \tau$ and the function $t \to K^*_t$ decreases,
\item if $c_1+c_2<0$ then $K^*_t <0$ for $t \le \tau$ and the function $t \to K^*_t$ increases.
\end{itemize}
Replacing $c_1$ and $c_2$ by their relevant expressions in each case leads to the result. \\
\end{proof}
Finally, we will show that the risk-sharing component of the contract is in fact linear with respect to the default time. This is a strong result of our study for which we had no ex-ante intuition.
\begin{Corollary}
The shutdown risk-sharing component of the optimal wage is linear in the default time. More precisely, the optimal wage is
$$
W^*=y_{PC}+Z^*B^*_{T\wedge \tau}+\frac{1}{2}\left(\gamma_A+\frac{1}{\kappa}\right)(Z^*)^2 (T\wedge \tau ) +K^*_0  -\left( \frac{c_1}{\gamma_P + \gamma_A} + \frac{\lambda}{\gamma_A}\right)(T \wedge \tau).
$$
\end{Corollary}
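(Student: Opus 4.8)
The plan is to start from the explicit representation of the optimal wage obtained at the end of the proof of Theorem~\ref{theo:main1}, namely
$$
W^*=y_{PC}+Z^*B^*_{T\wedge \tau}+\frac{1}{2}\left(\gamma_A+\frac{1}{\kappa}\right)(Z^*)^2 (T\wedge \tau )+K^*_{\tau_{-}} \textbf{1}_{\tau \leq T}  + \int_0^T  \frac{\lambda}{\gamma_A} [\exp(-\gamma_A K^*_s) - 1 ](1-N_s)\, ds,
$$
so that the entire task reduces to showing that the risk-sharing block \eqref{extraK} collapses to $K^*_0-\left(\frac{c_1}{\gamma_P+\gamma_A}+\frac{\lambda}{\gamma_A}\right)(T\wedge\tau)$, which is affine in $T\wedge\tau$.

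The key analytic input is the ODE satisfied by $K^*$. Since $\Phi_0(t)=\exp\big((\gamma_P+\gamma_A)K^*_t\big)$ solves the Bernoulli equation $\Phi_0'+c_1\Phi_0+c_2\Phi_0^{\gamma_P/(\gamma_P+\gamma_A)}=0$ appearing in the proof of Lemma~\ref{lem:temp2}, I would divide by $\Phi_0$ and use $\Phi_0^{-\gamma_A/(\gamma_P+\gamma_A)}=\exp(-\gamma_A K^*_t)$ together with $c_2/(\gamma_P+\gamma_A)=\lambda/\gamma_A$ to obtain
$$
\frac{d K^*_t}{dt}=-\frac{c_1}{\gamma_P+\gamma_A}-\frac{\lambda}{\gamma_A}\exp(-\gamma_A K^*_t).
$$
Equivalently, $\frac{\lambda}{\gamma_A}\exp(-\gamma_A K^*_t)=-\frac{dK^*_t}{dt}-\frac{c_1}{\gamma_P+\gamma_A}$, which is exactly the nonlinear quantity sitting inside the integrand.

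Next I would substitute this identity into the integral. Because $(1-N_s)=\textbf{1}_{s<\tau}$ and $K^*$ is deterministic, the integral over $[0,T]$ reduces to an ordinary integral over $[0,T\wedge\tau]$, while the $-1$ in the bracket contributes $-\frac{\lambda}{\gamma_A}(T\wedge\tau)$. The fundamental theorem of calculus then gives
$$
\int_0^{T\wedge\tau} \frac{\lambda}{\gamma_A}[\exp(-\gamma_A K^*_s)-1]\,ds = K^*_0-K^*_{T\wedge\tau}-\left(\frac{c_1}{\gamma_P+\gamma_A}+\frac{\lambda}{\gamma_A}\right)(T\wedge\tau).
$$
Adding back the terminal contribution $K^*_{\tau_-}\textbf{1}_{\tau\le T}$ and using that $K^*$ is continuous (so $K^*_{\tau_-}=K^*_\tau$) and that $K^*_T=0$ by Lemma~\ref{lemma:signK}, the boundary terms cancel: on $\{\tau\le T\}$ one has $K^*_\tau-K^*_{T\wedge\tau}=K^*_\tau-K^*_\tau=0$, while on $\{\tau>T\}$ one has $0-K^*_{T\wedge\tau}=-K^*_T=0$. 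What survives is precisely $K^*_0-\left(\frac{c_1}{\gamma_P+\gamma_A}+\frac{\lambda}{\gamma_A}\right)(T\wedge\tau)$, and substituting this back into $W^*$ yields the stated formula.

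The only genuinely delicate point is the bookkeeping of the two terminal contributions $K^*_{\tau_-}\textbf{1}_{\tau\le T}$ and $K^*_{T\wedge\tau}$: one must observe that the deterministic, continuous nature of $K^*$ removes any actual jump, and that the terminal normalization $K^*_T=0$ makes the cases $\{\tau\le T\}$ and $\{\tau>T\}$ cancel uniformly. Everything else is a one-line integration once the ODE for $K^*$ is in hand; conceptually, the linearity in $T\wedge\tau$ is forced by the fact that, along the optimal path, the nonlinear drift term $\frac{\lambda}{\gamma_A}\exp(-\gamma_A K^*)$ is an exact time-derivative of $-K^*$ plus a constant.
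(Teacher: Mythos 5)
Your proof is correct and follows essentially the same strategy as the paper's: reduce the risk-sharing block to the deterministic function $f(t)=K^*_t+\int_0^t\frac{\lambda}{\gamma_A}(e^{-\gamma_A K^*_s}-1)\,ds$ evaluated at $T\wedge\tau$ and show its derivative is the constant $-\bigl(\frac{c_1}{\gamma_P+\gamma_A}+\frac{\lambda}{\gamma_A}\bigr)$. The only (cosmetic) difference is that you obtain this by reading the identity $\frac{\lambda}{\gamma_A}e^{-\gamma_A K^*_t}=-\dot K^*_t-\frac{c_1}{\gamma_P+\gamma_A}$ directly off the Bernoulli ODE for $\Phi_0$, whereas the paper differentiates the explicit closed-form solution and cancels terms by hand; your route is a little cleaner and your explicit case analysis of the boundary terms $K^*_{\tau_-}\mathbf{1}_{\tau\le T}$ versus $K^*_{T\wedge\tau}$ is more careful than the paper's.
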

\begin{proof}
Because $K^*_T=0$, the optimal wage can be written
$$
W^*=y_{PC}+Z^*B^*_{T\wedge \tau}+\frac{1}{2}\left(\gamma_A+\frac{1}{\kappa}\right)(Z^*)^2 (T\wedge \tau )+f(T\wedge \tau),
$$
with 
$$
f(t) = K^*_t + \int_0^t \frac{\lambda}{\gamma_A}\left(\exp(-\gamma_A K^*_s) - 1 \right) ds,  t \in [0,T].
$$
Let us define
$$g(t) = \frac{c_1+c_2}{c_1}\exp\left(c_1 \frac{\gamma_A}{\gamma_P+\gamma_A}(T-t)\right) - \frac{c_2}{c_1}.$$
We have
$ g'(t) =  -(c_1+c_2)\frac{\gamma_A}{\gamma_P + \gamma_A}\exp\left(c_1 \frac{\gamma_A}{\gamma_P+\gamma_A}(T-t)\right) .$

Therefore, 
\begin{align*}
\frac{\partial }{\partial t} K^*_t &= \frac{1}{\gamma_A} \frac{g'(t)}{g(t)}\\
&= \frac{1}{\gamma_P + \gamma_A} \left\{ \dfrac{-(c_1+c_2)\exp\left(c_1 \frac{\gamma_A}{\gamma_P+\gamma_A}(T-t)\right) }{ \frac{c_1+c_2}{c_1}\exp\left(c_1 \frac{\gamma_A}{\gamma_P+\gamma_A}(T-t)\right) - \frac{c_2}{c_1}} \right\} \\
&= \frac{c_1}{\gamma_P + \gamma_A} \left\{ \dfrac{-(c_1+c_2)\exp\left(c_1 \frac{\gamma_A}{\gamma_P+\gamma_A}(T-t)\right) }{  (c_1+c_2)\exp\left(c_1 \frac{\gamma_A}{\gamma_P+\gamma_A}(T-t)\right) - c_2} \right\}.\\
\end{align*}
Also
$$ \frac{\partial}{\partial t}\int_0^t \frac{\lambda}{\gamma_A}\left(\exp(-\gamma_A K^*_s) - 1 \right) ds =  \frac{\lambda}{\gamma_A}\left(\exp(-\gamma_A K^*_t) - 1 \right)$$
and so : 

\begin{align*}
 \frac{\partial}{\partial t}\int_0^t \frac{\lambda}{\gamma_A}\left(\exp(-\gamma_A K^*_s) - 1 \right) ds
 &=  \frac{\lambda}{\gamma_A}\left(\exp(-\gamma_A K^*_t) - 1 \right) \\
  &=  \frac{\lambda}{\gamma_A}\left( \frac{1}{g(t)} - 1 \right)  \quad \text{well-defined as $g(t) > 0$ on $[0,T]$}\\
&=  \frac{\lambda}{\gamma_A} \left\{ \frac{1}{ \frac{c_1+c_2}{c_1}\exp\left(c_1 \frac{\gamma_A}{\gamma_P+\gamma_A}(T-t)\right) - \frac{c_2}{c_1}} - 1\right\}\\
 &=  \frac{c_1 \lambda}{\gamma_A} \left\{ \frac{1}{ (c_1 + c_2)\exp\left(c_1 \frac{\gamma_A}{\gamma_P+\gamma_A}(T-t)\right) - c_2 } \right\} - \frac{\lambda}{\gamma_A}
\end{align*}
 
 Finally, we have : 
 \begin{align*}
  f'(t) &= \frac{\partial }{\partial t} K^*_t +  \frac{\partial}{\partial t}\int_0^t \frac{\lambda}{\gamma_A}\left(\exp(-\gamma_A K^*_s) - 1 \right) ds\\
  &=  \frac{c_1}{\gamma_P + \gamma_A} \left\{ \dfrac{-(c_1+c_2)\exp\left(c_1 \frac{\gamma_A}{\gamma_P+\gamma_A}(T-t)\right) }{  (c_1+c_2)\exp\left(c_1 \frac{\gamma_A}{\gamma_P+\gamma_A}(T-t)\right) - c_2} \right\} \\
  &+  \frac{c_1 \lambda}{\gamma_A} \left\{ \frac{1}{ (c_1 + c_2)\exp\left(c_1 \frac{\gamma_A}{\gamma_P+\gamma_A}(T-t)\right) - c_2 } \right\} - \frac{\lambda}{\gamma_A}\\
  &=   \frac{-c_1}{\gamma_P + \gamma_A}  \frac{1}{ (c_1 + c_2)\exp\left(c_1 \frac{\gamma_A}{\gamma_P+\gamma_A}(T-t)\right) - c_2 }\left\{ (c_1 + c_2)\exp\left(c_1 \frac{\gamma_A}{\gamma_P+\gamma_A}(T-t)\right) - \lambda \frac{\gamma_A}{\gamma_P + \gamma_A}\right\} \\
  & -\frac{\lambda}{\gamma_A} \\
  & = -\frac{c_1}{\gamma_P + \gamma_A }- \frac{\lambda}{\gamma_A}\quad \text{as $c_2 = \frac{\lambda \gamma_A}{\gamma_P + \gamma_A}$}\\
  & =  -\left( \frac{c_1}{\gamma_P + \gamma_A} + \frac{\lambda}{\gamma_A}\right).
 \end{align*}
 \end{proof}
 
 We may note that the default related part of the wage paid under full-Risk-Sharing also writes under this form, where we take the relevant values for $K_0^*, c_1$ and $c_2$. 
 \begin{Remark}
A little algebra gives
$$ f(t) = K^*_0  -\left( \frac{c_1}{\gamma_P + \gamma_A} + \frac{\lambda}{\gamma_A}\right)t = K^*_0  - \frac{\left( c_1 + c_2 \right)t}{\gamma_P + \gamma_A} $$
The slope of $f$ is of opposite sign to the sign of $c_1+c_2$. As a consequence, 
\begin{itemize}
\item If $c_1+c_2=0$ then $f(t)=0$ for every $t \in [0,T]$,
\item if $c_1+c_2>0$ then $f(0)=K^*_0>0$ for $t \le \tau$ and the function $t \to f(t)$ decreases,
\item if $c_1+c_2<0$ then $f(0)=K^*_0 <0$ for $t \le \tau$ and the function $t \to f(t)$ increases.
\end{itemize}
\end{Remark}

$K_0^*>0$ is the extra compensation asked at the signature of the contract by an agent who is more sensitive to the shutdown risk than the principal.
 We can easily visualize the sign of the control $K^*_0$ as a function of $\gamma_P,$ $\gamma_A$ and $\kappa$. Below we plot the sign  depending on the risk-aversions and fixing $\kappa=1$ and $\kappa=2$. The first two plots (Figure \ref{fig:test1} and \ref{fig:test2}) correspond to the full Risk-Sharing case whilst Figures \ref{fig:test3} and \ref{fig:test4} show the sign of $K^*$ in the Moral Hazard case. The $x$-axis holds the values of $\gamma_A$ and the $y$-axis the values of $\gamma_P$. Both risk-aversion constants are valued between $0$ and $10$ and the origin is in the bottom left corner. Blue encodes a negative sign and green a positive sign.

\begin{figure}[h!]
\centering
\begin{minipage}{0.33\textwidth}
  \centering
  \includegraphics[width=.70\linewidth]{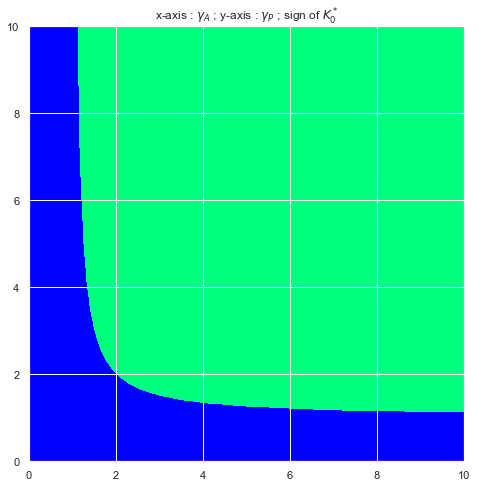}
  \captionof{figure}{Sign of $K_0^*$ depending on $\gamma_P$ and $\gamma_A$ for $\kappa=1$.}
  \label{fig:test1}
\end{minipage}%
~
~
\begin{minipage}{0.33\textwidth}
  \centering
  \includegraphics[width=.70\linewidth]{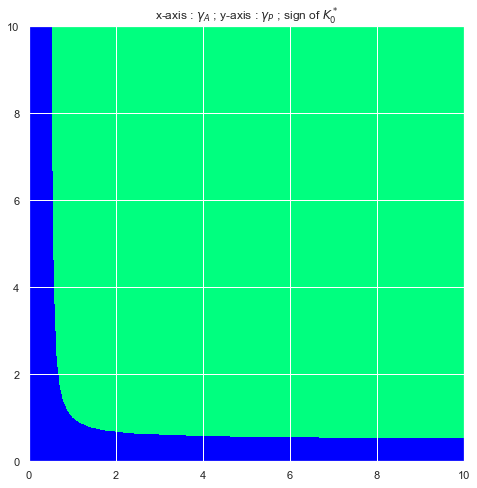}
  \captionof{figure}{Sign of $K_0^*$ depending on $\gamma_P$ and $\gamma_A$ for $\kappa=2$.}  \label{fig:test2}
\end{minipage}
\end{figure}

\begin{figure}[htb!]
\centering
\begin{minipage}{0.33\textwidth}
  \centering
  \includegraphics[width=.70\linewidth]{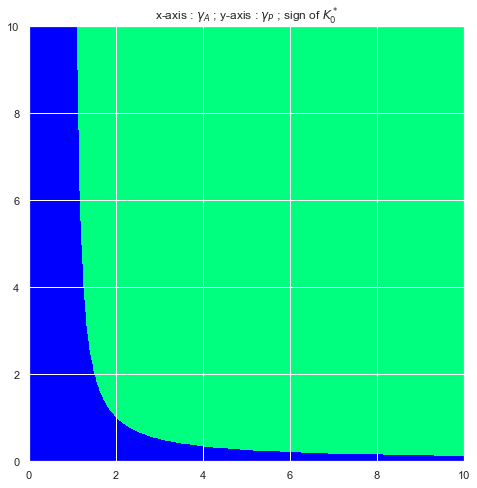}
  \captionof{figure}{Sign of $K_0^*$ depending on $\gamma_P$ and $\gamma_A$ for $\kappa=1$. }
  \label{fig:test3}
\end{minipage}%
~
~
\begin{minipage}{0.33\textwidth}
  \centering
  \includegraphics[width=.70\linewidth]{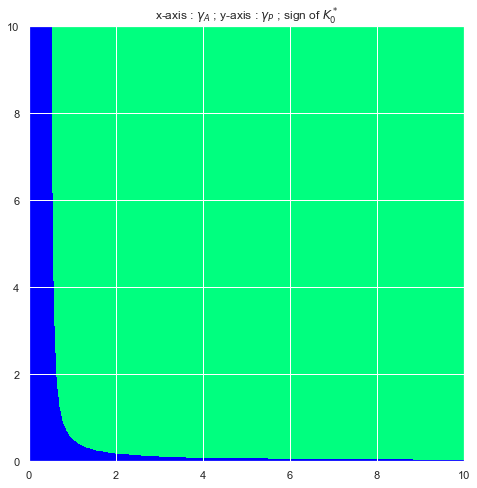}
  \captionof{figure}{Sign of $K_0^*$ depending on $\gamma_P$ and $\gamma_A$ for $\kappa=2$.} \label{fig:test4}
\end{minipage}
\end{figure}

We observe that in most situations, the sign of $K^*_0$ is positive. A negative sign occurs when either the principal or the agent are close to being risk-neutral (symmetrically so in the full Risk-Sharing case but asymmetrically so in the Moral Hazard case : the sign switches from negative to positive at a much lower level of risk-aversion for the principal than the agent). Also note that increasing the agent's effort coefficient $\kappa$ decreases the level of risk-aversion for which $K^*_0$ goes from positive to negative. 
Note that it is known by both the Principal and the Agent at time $0$ whether the contract will fall into either regime.\\

Our key result shows that the risk of shutdown adds an extra linear term to the optimal compensation that, up to the underlying constants, has the same structure under both full Risk Sharing and under Moral Hazard.
Even if the optimal contract separates the role of incentives from that of shutdown risk sharing, the amount of the {\it insurance deposit} $K^*_0$ depends strongly on the magnitude of the moral hazard problem. Therefore, we may naturally quantify the effect of Moral Hazard on the risk-sharing part. With this question in mind we compare the values of $K^*_0$ under Risk-Sharing and Moral Hazard for different parameter values.  This may be observed in the Figures \ref{KO1} to \ref{KO5} below where we represent the values of $K^*_0$ under Moral Hazard (red) and Risk-Sharing (blue) as a function of one of the underlying parameters ($\gamma_P, \gamma_A, \lambda, \kappa, T$) whilst fixing the remaining 4. \\

These figures lead to a crucial observation : under any given set of parameters, the value of $K^*_0$ under Moral Hazard is always greater than the value of $K^*_0$ under Risk Sharing. Note that in some settings, given some parameters the signs of the two cases are not always the same as the sign is that of $c_1+c_2$ which depends on the values of $\gamma_P$, $\gamma_A$ and $\kappa$ in a different manner in Risk-Sharing and Moral Hazard. Of course though, as soon as $K_0^*$ for Risk-Sharing is positive,  $K_0^*$ for Moral Hazard is positive too. We may note that although $\lambda$ and $T$ do not impact the sign, variations in their values have an impact on the magnitude $|K_0^*|$ : the higher the risk of early termination of the contract, the lower the amount of insurance requested by the agent and the longer the duration of the contract, the higher the amount of insurance requested by the agent.
\begin{figure}[htp]
\caption{Values of $K_0^*$ depending on $\gamma_P$}
\label{KO1}
\centering
\begin{minipage}{0.31\textwidth}
  \centering
  \includegraphics[width=.9\linewidth]{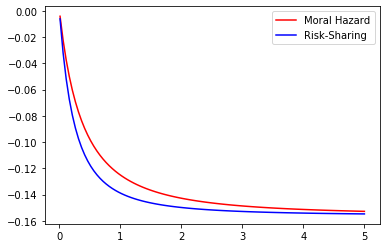}
   \par
  \footnotesize{$\quad \gamma_A=0.5, \lambda = 1, \kappa=1, T=1$}
\end{minipage}
~
~
\begin{minipage}{0.31\textwidth}
  \centering
  \includegraphics[width=.9\linewidth]{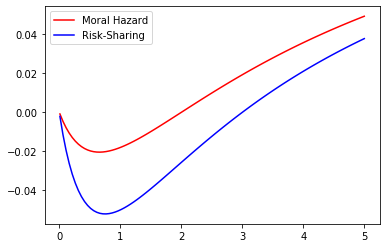} \par
  \footnotesize{$\quad \gamma_A=1.5, \lambda = 1, \kappa=1, T=1$}
  \label{fig:test1}
\end{minipage}%
~
~
\begin{minipage}{0.31\textwidth}
  \centering
  \includegraphics[width=.9\linewidth]{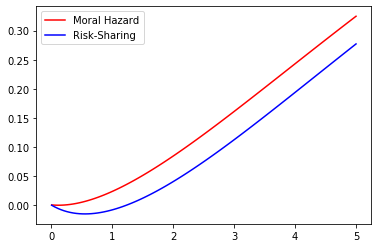}
  \footnotesize{$\quad \gamma_A=5, \lambda = 1, \kappa=1, T=1$}
\end{minipage}
\end{figure}

\begin{figure}[htp]
\caption{Values of $K_0^*$ depending on $\gamma_A$}
\label{KO2}
\centering
\begin{minipage}{0.31\textwidth}
  \centering
  \includegraphics[width=.9\linewidth]{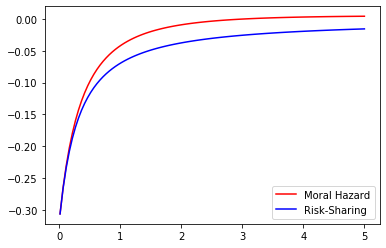}
   \par
  \footnotesize{$\quad \gamma_P=0.5, \lambda = 1, \kappa=1, T=1$}
\end{minipage}
~
~
\begin{minipage}{0.31\textwidth}
  \centering
  \includegraphics[width=.9\linewidth]{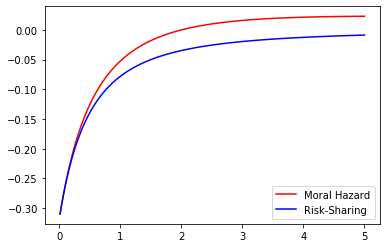} \par
  \footnotesize{$\quad \gamma_P=1, \lambda = 1, \kappa=1, T=1$}
  \label{fig:test1}
\end{minipage}%
~
~
\begin{minipage}{0.31\textwidth}
  \centering
  \includegraphics[width=.9\linewidth]{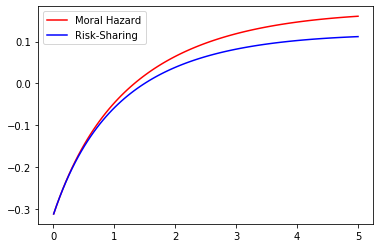}
  \footnotesize{$\quad \gamma_P=3, \lambda = 1, \kappa=1, T=1$}
\end{minipage}
\end{figure}

\begin{figure}[htp]
\caption{Values of $K_0^*$ depending on $\lambda$}
\label{KO3}
\centering
\begin{minipage}{0.31\textwidth}
  \centering
  \includegraphics[width=.9\linewidth]{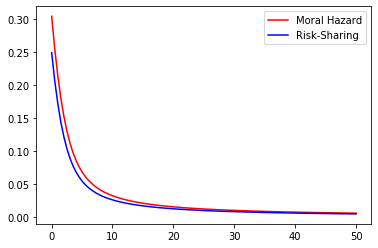} \par
  \footnotesize{$\quad \gamma_P=4, \gamma_A = 4, \kappa=1, T=1$}
  \label{fig:test1}
\end{minipage}%
~
~
\begin{minipage}{0.31\textwidth}
  \centering
  \includegraphics[width=.9\linewidth]{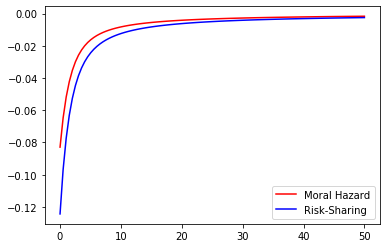}
   \par
  \footnotesize{$\quad \gamma_P=1, \gamma_A = 1, \kappa=1, T=1$}
\end{minipage}

\end{figure}

\begin{figure}[htp]
\caption{Values of $K_0^*$ depending on $\kappa$}
\label{KO4}
\centering
\begin{minipage}{0.31\textwidth}
  \centering
  \includegraphics[width=.9\linewidth]{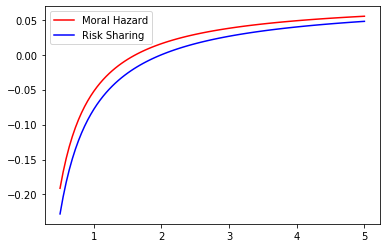} \par
  \footnotesize{$\quad \gamma_P=1, \gamma_A = 1, \lambda=1, T=1$}
  \label{fig:test1}
\end{minipage}%
~
~
\begin{minipage}{0.31\textwidth}
  \centering
  \includegraphics[width=.9\linewidth]{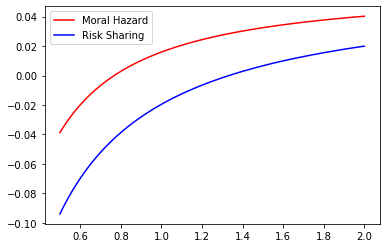}
   \par
  \footnotesize{$\quad \gamma_P=1, \gamma_A = 3, \lambda=1, T=1$}
\end{minipage}
~
~
\begin{minipage}{0.31\textwidth}
  \centering
  \includegraphics[width=.9\linewidth]{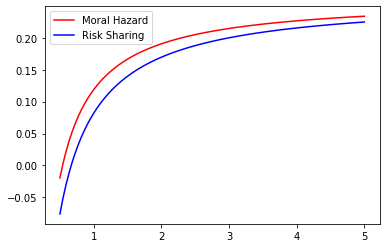}
   \par
  \footnotesize{$\quad \gamma_P=3, \gamma_A = 3, \lambda=1, T=1$}
\end{minipage}

\end{figure}

\begin{figure}[htp]
\caption{Values of $K_0^*$ depending on $T$}
\label{KO5}
\centering
\begin{minipage}{0.31\textwidth}
  \centering
  \includegraphics[width=.9\linewidth]{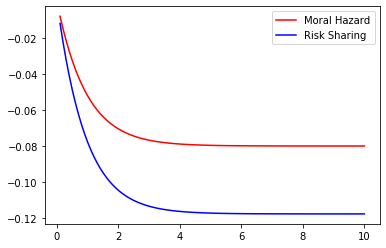} \par
  \footnotesize{$\quad \gamma_P=1, \gamma_A = 1, \lambda=1, \kappa=1$}
  \label{fig:test1}
\end{minipage}%
~
~
\begin{minipage}{0.31\textwidth}
  \centering
  \includegraphics[width=.9\linewidth]{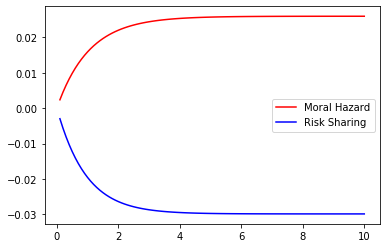}
   \par
  \footnotesize{$\quad \gamma_P=1, \gamma_A = 3, \lambda=1, \kappa=1$}
\end{minipage}
~
~
\begin{minipage}{0.31\textwidth}
  \centering
  \includegraphics[width=.9\linewidth]{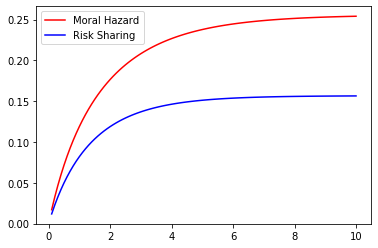}
   \par
  \footnotesize{$\quad \gamma_P=3, \gamma_A = 3, \lambda=1, \kappa=1$}
\end{minipage}

\end{figure}
\ \\

Additionally, the parameter $\lambda$ does have a quantifiable effect of the wage as it affects its expected value : 
$$
\E(f(T\wedge \tau))= K_0^*-\frac{c_1+c_2}{\gamma_P+\gamma_A}\left(\frac{1-e^{-\lambda T}}{T}\right).
$$

This expected value is represented below as a function of $\gamma_A$ and $\gamma_P$ for different values of $\lambda$ (with $T=1$ and $\kappa=1$ fixed). The first three figures (Figures \ref{fig:test5}, \ref{fig:test6} and \ref{fig:test7}  ) concern the full Risk-Sharing case whilst the second set of figures  (Figures \ref{fig:test8}, \ref{fig:test9} and \ref{fig:test10}) concern the Moral Hazard setting.  \\
\begin{figure}[htb!]
\centering

\begin{minipage}{0.31\textwidth}
  \centering
  \includegraphics[width=.95\linewidth]{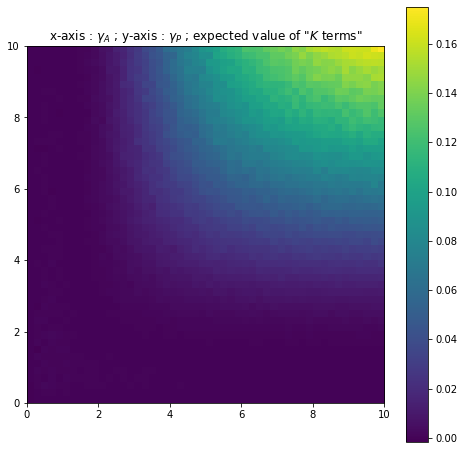}
  \captionof{figure}{Expected value depending on $\gamma_P$ and $\gamma_A$ for $\lambda=0.5$. }
  \label{fig:test5}
\end{minipage}%
~
~
\begin{minipage}{0.31\textwidth}
  \centering
  \includegraphics[width=.95\linewidth]{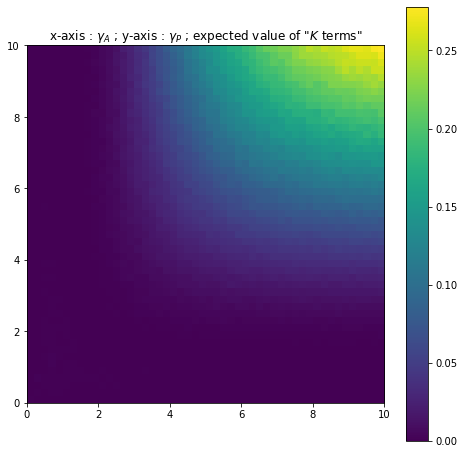}
  \captionof{figure}{Expected value depending on $\gamma_P$ and $\gamma_A$ for $\lambda=1$. }
  \label{fig:test6}
\end{minipage}%
~
~
\begin{minipage}{0.31\textwidth}
  \centering
  \includegraphics[width=.95\linewidth]{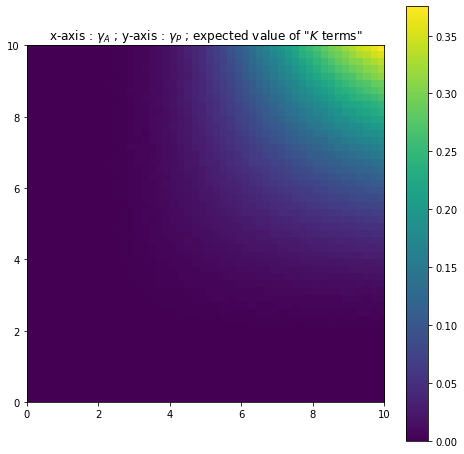}
  \captionof{figure}{Expected value depending on $\gamma_P$ and $\gamma_A$ for $\lambda=5$. }
  \label{fig:test7}
\end{minipage}
\end{figure}

\begin{figure}[htb!]
\centering

\begin{minipage}{0.31\textwidth}
  \centering
  \includegraphics[width=.95\linewidth]{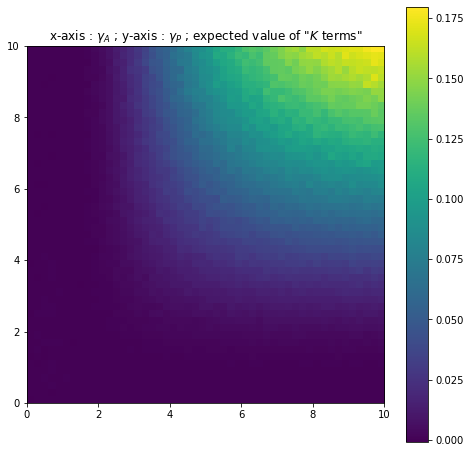}
  \captionof{figure}{Expected value depending on $\gamma_P$ and $\gamma_A$ for $\lambda=0.5$. }
  \label{fig:test8}
\end{minipage}%
~
~
\begin{minipage}{0.31\textwidth}
  \centering
  \includegraphics[width=.95\linewidth]{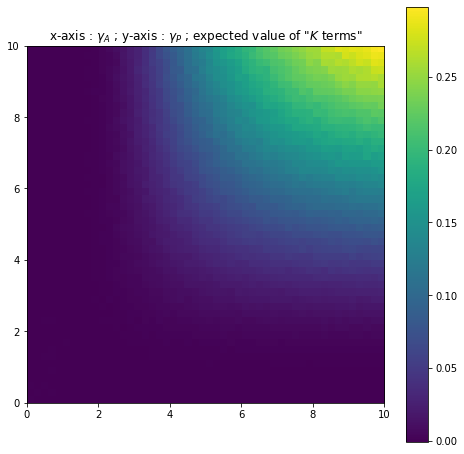}
  \captionof{figure}{Expected value depending on $\gamma_P$ and $\gamma_A$ for $\lambda=1$. }
  \label{fig:test9}
\end{minipage}%
~
~
\begin{minipage}{0.31\textwidth}
  \centering
  \includegraphics[width=.95\linewidth]{lam5K1RS}
  \captionof{figure}{Expected value depending on $\gamma_P$ and $\gamma_A$ for $\lambda=5$. }
  \label{fig:test10}
\end{minipage}
\end{figure}

The value seems to be in many cases very close to 0. As such, the agent earns, on average, a very similar wage to a "stopped" Holmstrom-Milgrom wage. However when the principal and the agent are particularly risk-averse, the expected value increases quite notably and the agent gains slightly more on average. This is in line with the papers by Hoffman and Pfeil \cite{HoffmanPfeil} and Bertrand and Mullainathan \cite{Bertrand} which show the agent must be rewarded for a risk that is beyond his control. Note that the simulations show that for fixed levels of risk-aversion the expected value increases as $\lambda$ increases. For example for $\gamma_P=\gamma_A=7$, in Figure \ref{fig:test8} the expected value is approximately worth $0.075$ and in Figure \ref{fig:test9} it is approximately equal to $0.15$. As such the risk-averse agent is increasingly rewarded for the uncontrollable risk.  

Finally, we can make a few further comments related to the underlying expected utilities in this new contracting setting. First, the agent's expected utility is the same under both full Risk-Sharing and Moral Hazard. Indeed he walks away with his participation constraint : 
$$ \E\left[ U_A\left(W^* - \int_0^T \kappa(a^*_s) ds\right) \right] = U_A(y_{PC}),$$
where $(W^*, a^*)$ designates the Risk-Sharing or Moral Hazard optimal contract under shutdown risk. Such a result also holds in the same setting without shutdown risk : the agent tracts average the same expected utility under full Risk-Sharing or Moral Hazard, with or without an underlying agency-free external risk. \\
When it comes to the principal we may wonder how his expected utility may be affected by the possibility of a halt in production. In particular we may question what the principal loses in not being able to observe the agent's action under a likelihood of agency-free external risk ? To answer this we denote as $V_0^{RS}(0,x,y)$ the Principal's expected utility under full Risk-Sharing and $V_0^{MH}(0,x,y)$ under Moral Hazard. We have that : 
$$V_0^{RS}(0,x,y) = U_P(x-y) \Phi_0^{RS}(0) \quad \text{and} \quad V_0^{MH}(0,x,y) = U_P(x-y) \Phi_0^{MH}(0)$$
where $\Phi_0^{RS}$ and $\Phi_0^{MH}$ are the related functions from Theorem \ref{theo:mainFB} and Theorem \ref{theo:main1}. With these expressions we have that : 
$$ \dfrac{ V_0^{MH}(0,x,y) }{V_0^{RS}(0,x,y) } = \frac{ \Phi_0^{MH}(0)}{ \Phi_0^{RS}(0)}.$$
The Moral Hazard problem involves optimizing across a more restricted set of contracts. Therefore we know that : 
$$V_0^{MH}(0,x,y) \leq V_0^{RS}(0,x,y),$$
and as $U_P(x-y)<0$ : 
$$\Phi_0^{RS}(0) \leq \Phi_0^{MH}(0).$$ 
We may question whether this inequality leads to a big gap between the expected utilities and we answer this by plotting the ratio $\frac{\Phi_0^{MH}(0)}{\Phi_0^{RS}(0)}$ for different values of $\lambda$ and with $T=1$ and $\kappa=1$ fixed.

\begin{figure}[htb!]
\centering

\begin{minipage}{0.31\textwidth}
  \centering
  \includegraphics[width=.95\linewidth]{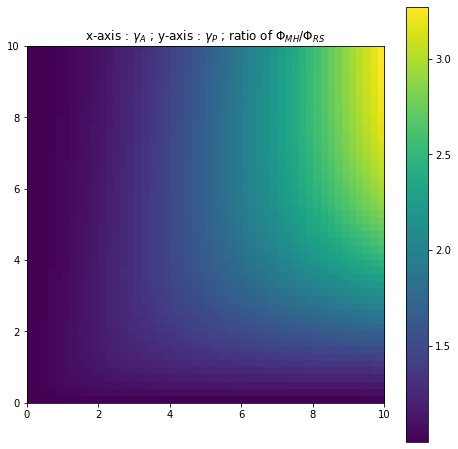}
  \captionof{figure}{ Ratio depending on $\gamma_P$ and $\gamma_A$ for $\lambda=0.5$. }
  \label{fig:test11}
\end{minipage}%
~
~
\begin{minipage}{0.31\textwidth}
  \centering
  \includegraphics[width=.95\linewidth]{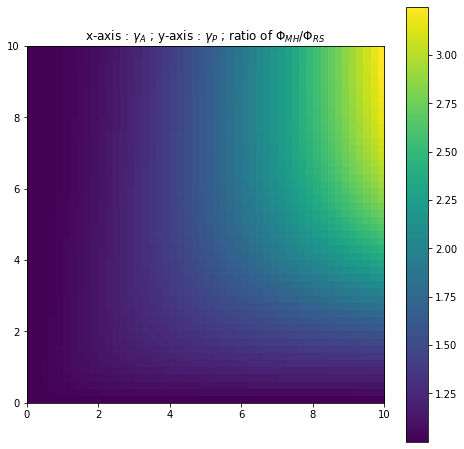}
  \captionof{figure}{Ratio depending on  $\gamma_P$ and $\gamma_A$ for $\lambda=1$. }
  \label{fig:test12}
\end{minipage}%
~
~
\begin{minipage}{0.31\textwidth}
  \centering
  \includegraphics[width=.95\linewidth]{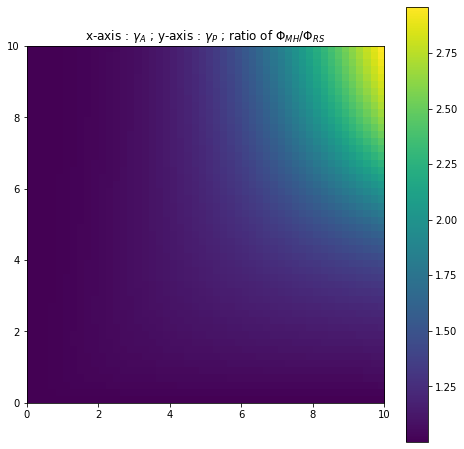}
  \captionof{figure}{Ratio depending on $\gamma_P$ and $\gamma_A$ for $\lambda=5$. }
  \label{fig:test13}
\end{minipage}
\end{figure}

We first observe a standard result : for low levels of risk-aversion the ratio is close to 1 and the principal does not lose much by not observing the agent's actions. As the values of risk-aversion increase the principal loses out more and more by not being in a first best setting. This classical result comes with an observation that is specific to the presence of shutdown risk : as $\lambda$ increases (and therefore as the chance of shutdown risk occurring before $T$ increases), the ratio stays close to 1 for higher and higher levels of risk-aversion. For example in Figure \ref{fig:test11} when $\gamma_P=\gamma_A=6$ we observe that $ \frac{\Phi_0^{MH}(0)}{\Phi_0^{RS}(0)} \approx 2$ yet in Figure \ref{fig:test13} for the same levels of risk-aversion we have that  $ \frac{\Phi_0^{MH}(0)}{\Phi_0^{RS}(0)} \approx 1.$ So a high possibility of some production halt occurring reduces the gap between the full Risk-Sharing contract and the Moral Hazard contract. Such a phenomena may be due to the fact that a high possibility of a production halt at some point in the time interval means that the wage process evolves on a time period that is on average shorter before stopping. There is thus less time for a significant gap to appear between the full Risk-Sharing case and the Moral Hazard case. \\

As this analysis comes to a close we finish this section by discussing a possibility for extension with more general deterministic compensators.

\subsection{General deterministic compensators $(\Lambda_t)_{t \in [0,T]}$} \label{deterministicintensity}

Throughout this paper, we have considered a constant compensator $\lambda$ for the jump process. This choice allows for clearer calculations but it is key to note that our results extend to the case where $\lambda$ is no longer constant such as : 

$$ \Lambda_t = \int_0^t \lambda_s ds,$$
with $(\lambda_s)_{s \in [0,T]}$ some deterministic positive mapping such that $\Lambda_T<+\infty$. The proofs for the optimal contracting are simply a direct extension of the proofs of the previous sections. Of course due to the independence between $B$ and $N$, a time dependent compensator does not induce any change to the Holmstrom-Milgrom part of the wages. Only the part related to $K^*$ is affected. We provide the details in the following. \\

\noindent \textbf{\scshape{The full Risk-Sharing problem}}\\

\noindent The optimal wage for the Risk-Sharing problem in such a setting is of the form~: 
\begin{align*}
W_t &= y^* + \int_0^t Z^*_s (1-N_s)dB_s + \int_0^t K^*_s (1-N_s) dM_s \\&+ \int_0^t \left\{ \frac{\gamma_A}{2} {Z_s^*}^2 + \kappa(a^*_s(1-N_s)) + \frac{\lambda_s}{\gamma_A} [\exp(-\gamma_A K^*_s) - 1 + \gamma_A K^*_s ]\right\} (1-N_s)ds,
\end{align*}
where : 
$$ y^* = y_{PC}, a^*_t = \frac{1}{\kappa}, Z^*_t = \frac{\gamma_P}{\gamma_P + \gamma_A} \quad \text{and} \quad K^*_t = \frac{1}{\gamma_P + \gamma_A} \log(\Phi_0(t)),$$
with $\Phi_0(t)$ solution to the Bernouilli equation : 

$$ \Phi'_0(t) + c_1(t)\Phi_0(t) + c_2(t) \Phi_0(t)^{\frac{\gamma_P}{\gamma_P+\gamma_A}}=0, \quad \Phi_0(T) = 1,$$
where 
$$ c_1(t) = \frac{\gamma_P^2 \gamma_A}{2(\gamma_P+\gamma_A) } - \frac{\gamma_P}{2\kappa} - \lambda_t \frac{\gamma_P + \gamma_A}{\gamma_A} \quad \text{and} \quad c_2(t) = \lambda_t \frac{\gamma_P + \gamma_A}{\gamma_A}.$$

\noindent\textbf{\scshape{The Moral Hazard problem}}\\

\noindent The optimal wage in the Moral-Hazard problem is again of the form : 
$$W_t = y^* + \int_0^t Z_s^*(1-N_s) dB^*_s + \int_0^t K_s^* (1-N_s)dM_s + \int_0^t \left\{ \frac{1}{2} \gamma_A  {Z_s^*}^2 - \frac{{Z_s^*}^2}{2 \kappa} + \lambda_s K_s^*  +  \frac{\lambda_s}{\gamma_A}(e^{-\gamma_A K_s^*}-1) \right\} (1-N_s)ds, $$

where : 
$$ y^* = y_{PC}, Z^*_t = \frac{\gamma_P + \kappa^{-1}}{\gamma_P + \gamma_A + \kappa^{-1}} \quad \text{and} \quad K^*_t = \frac{1}{\gamma_P + \gamma_A} \log(\Phi_0(t)) ,$$
with $\Phi_0(t)$ solution to the Bernouilli equation : 

$$ \Phi'_0(t) + c_1(t)\Phi_0(t) + c_2(t) \Phi_0(t)^{\frac{\gamma_P}{\gamma_P+\gamma_A}}=0, \quad \Phi_0(T) = 1,$$

with : $$  c_1(t) = \frac{\gamma_P^2 \gamma_A}{2(\gamma_P+\gamma_A + \kappa^{-1}) } - \frac{\gamma_P\kappa^{-1}(\gamma_P+\kappa^{-1})}{2{(\gamma_P + \gamma_A + \kappa^{-1})}} - \lambda_t \frac{\gamma_P + \gamma_A}{\gamma_A} \quad \text{and} \quad c_2(t) = \lambda_t \frac{\gamma_P + \gamma_A}{\gamma_A}.$$



\section{Mitigating the effects of agency-free external risk}
\label{sec:M}

This paper has so far modeled the occurence of a halt as a complete fatality suffered by both parties in the contracting agreement. Yet the recent crisis has highlighted the ability of humans and businesses to react and adapt when faced with adversity. We now include such phenomena in the contracting setting by allowing the principal to invest upon a halt in order to continue some form of (possibly disrupted) production. This is quite a natural and realistic variant on our initial model. Indeed when faced with a period of lockdown, companies may for example invest in teleworking infrastructure so that a number of employees whose jobs are doable remotely can continue to work. Similarly, jobs that require some form of presence could continue if companies invest in protective equipment and adapt their organization. We may wonder how such a mechanism may affect optimal contracting. 

\subsection{Setting for mitigation}
Mathematically, we consider that the production process evolves as previously up until $\tau \wedge T$. If a halt happens at some time $\tau \leq T$, we allow the principal to invest an amount $i>0$ to continue production at a degraded level $\theta \in (0,1)$. It is assumed that the investment decision is at the principal's convenience. It is modeled by a control $D$ which is a ${\mathbb G}_\tau$-measurable random variable with values in $\{0,1\}$. The $\theta$ parameter is firm-specific and  reflects the effectiveness of the post-shutdown reorganization.\\
Under the initial probability measure $\mathbb{P}^0$, the output process $X_t$ evolves as
$$
X_t=x_0+\int_0^t ((1-N_s)+N_s\ind_{D=1})\,dB_s.
$$
We recall from \cite{Jeulin:1980} Lemma 4.4. the decomposition of a $\mathbb G$-adapted process $\phi$. There exist a $\mathbb F$-adapted process $\phi^0$ and a family of processes $(\phi^1_t (u), u \le t \le T)$ that are $\mathbb F_t \otimes {\cal B}(\R_+)$ measurable such that
$$
\phi_t=\phi_t^0\ind_{t<\tau}+\phi^1_t(\tau)\ind_{t \ge \tau}.
$$ 
{\it Contract:}  A contract $W$ is a $\mathbb{G}_{T}$-measurable random variable satisfying $\E(\exp(-2\gamma_A W))<+\infty$ of the form
$$
W=W^0\ind_{T<\tau}+(W_T^{1,1}(\tau)\ind_{D=1}+W^{1,0}_T(\tau)\ind_{D=0})\ind_{\tau \le T}.
$$
where $W^0$ is $\mathbb F_T$-mesurable and $W_t^{1,1}(u)$ and $W_t^{1,0}(u)$ are $\mathbb F_t \otimes {\cal B}(\R_+)$ measurable. We will assume that $W_T^{1,0}(\tau)=W_{T\wedge \tau}^{1,0}(\tau)$ since in the absence of investment, it is no longer necessary to give incentives after $\tau$.\\

{\it Effort process:} In this setting, the agent will adapt his effort to the occurence of the shutdown risk. This is mathematically modeled by a $\mathbb G$-adapted process $(a_t)_t$ in the form
$$
a_t=a_t^0\ind_{t<\tau}+a^1_t(\tau)\ind_{t \ge \tau}
$$ 
where $a^0$ and $a^1$ are respectively $\mathbb F$-adapted and $\mathbb F_t \otimes {\cal B}(\R_+)$ measurable. Furthermore, we assume that the effort processes are  bounded by some constant $A$.
We then define $\P^a$ as $ \frac{d\mathbb{P}^a}{d\mathbb{P}^0}|\mathcal{G}_T =L_T^\theta$, with
$$
L_T^\theta = \exp\left( \int_0^T a_s^0(1-N_s)+\theta a^1_s(\tau)N_s\ind_{D=1} dB_s - \frac{1}{2} \int_0^T (a_s^0)^2 (1-N_s)+\theta^2(a^1_s(\tau))^2N_s\ind_{D=1}ds \right)$$
 Because the processes $a^0$ and $a^1$ are bounded, $(B^a_t)_{t \in [0,T]}$ with
 $$
 B^a_t = B_t - \int_0^t (a_s^0(1-N_s)+\theta a^1_s(\tau)N_s\ind_{D=1})\, ds, t \in [0,T]
 $$ 
 is a $\mathbb{G}$-Brownian motion under $\mathbb{P}^a$. Under $\P^a$, the output process evolves as
 $$
 X_t=x_0+\int_0^t (a_s^0(1-N_s)+\theta a^1_s(\tau)N_s\ind_{D=1})\, ds+\int_0^t ((1-N_s)+N_s\ind_{D=1})\,dB_s.
 $$

\subsection{The Optimal contract}

We first make the following observation. After $\tau$, if the default time occurs before the maturity of the contract, the principal has a binary decision to take. If she decides to not invest, she gets the value $V^{1,0}(x,y)=U_P(x-y)$ where $x$ 
is the level of input and $y$ is the agent continuation value.
On the other hand, if she decides to invest, she will face for $t \ge \tau$ the moral hazard problem of Holmstrom and Milgrom for which we know the optimal contract and the associated value function
$$
V^{1,1}(t,x,y)=U_P(x-y) \Phi_1(t, \theta) 
$$
where $\Phi_1(t, \theta) = \exp(-\gamma_P C_{inv}(T-t))$ and $C_{inv}:= \dfrac{\left(\gamma_P + \frac{\theta^2}{ \kappa}\right)^2}{2\left(\gamma_P + \gamma_A + \frac{\theta^2}{\kappa} \right)} - \frac{\gamma_P}{2}.$\\
Because the principal has to pay a sunk cost $i>0$ to invest, she will decide optimally to invest if and only if at  $\tau$ for a given $(x,y)$, she observes
$$
V^{1,1}(\tau,x,y) \ge V^{1,0}(x,y),
$$
or equivalently $C_{inv}(T-\tau) >i$. Hence, if $C_{inv}>0$, the optimal control will be $D^*=\ind_{\{\tau < T-\frac{i}{C_{inv}}\}}.$
To sum up, we have
\begin{Lemma}
\label{lemma:new1}
\begin{enumerate}
\item Investment for mitigation is \underline{never} optimal upon a halt if : 
$$ C_{inv} < 0 \quad \text{or} \quad i > T C_{inv}. $$
\item Now suppose that :
$$ C_{inv} > 0 \quad \text{and} \quad i < T C_{inv}. $$
Mitigation is optimal up until the cutoff time $t_{max}$ defined as : 
$$ t_{max} := T - \frac{i}{C_{inv}}. $$
Note that  $i < T C_{inv}$ guarantees that $t_{max} \geq 0.$
\end{enumerate}
\end{Lemma}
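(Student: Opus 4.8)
The plan is to reduce the principal's binary investment decision at the halt time to the single scalar inequality $C_{inv}(T-\tau) > i$, and then to read off both parts of the statement from a sign-and-monotonicity analysis of the affine map $\tau \mapsto C_{inv}(T-\tau)$ on $[0,T]$. I would first make the reduction precise. At a halt $\tau \le T$ the principal compares the no-investment value $V^{1,0}(x,y)=U_P(x-y)$ with the value of investing, which in the CARA framework carries the sunk cost $i$ as the multiplicative factor $e^{\gamma_P i}$, namely $e^{\gamma_P i}V^{1,1}(\tau,x,y)=U_P(x-y)\,\Phi_1(\tau,\theta)\,e^{\gamma_P i}$. Since $U_P(x-y)=-\exp(-\gamma_P(x-y))<0$, dividing the comparison $e^{\gamma_P i}V^{1,1}(\tau,x,y)\ge V^{1,0}(x,y)$ by $U_P(x-y)$ reverses it to $\Phi_1(\tau,\theta)\,e^{\gamma_P i}\le 1$; taking logarithms and using $\Phi_1(\tau,\theta)=\exp(-\gamma_P C_{inv}(T-\tau))$ together with $\gamma_P>0$ yields exactly the criterion
\[
C_{inv}(T-\tau) > i
\]
obtained just above the statement, the inequality being strict for a strict preference to invest.

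For Part 1, I would show this criterion fails for every $\tau\in[0,T]$ under either hypothesis, using only $0\le T-\tau\le T$. If $C_{inv}<0$ then $C_{inv}(T-\tau)\le 0 < i$, so investing is never optimal. If instead $i>TC_{inv}$, then when $C_{inv}\le 0$ the previous bound already applies, while when $C_{inv}>0$ monotonicity gives $C_{inv}(T-\tau)\le C_{inv}T<i$; in all cases the criterion is violated and the principal never invests.

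For Part 2, under $C_{inv}>0$ I would divide the criterion by $C_{inv}$ to rewrite it as $T-\tau > i/C_{inv}$, that is $\tau < T-\dfrac{i}{C_{inv}}=t_{max}$. Since $\tau\mapsto C_{inv}(T-\tau)$ is strictly decreasing and equals $i$ precisely at $\tau=t_{max}$, the principal strictly prefers to invest for $\tau<t_{max}$ and strictly prefers not to invest for $\tau>t_{max}$, which is exactly the asserted cutoff structure. Finally, $i<TC_{inv}$ gives $i/C_{inv}<T$, hence $t_{max}>0$ and in particular $t_{max}\ge 0$.

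I do not expect a genuine obstacle: the argument is a one-dimensional sign analysis of a linear function of $\tau$. The only point requiring care is the correct incorporation of the sunk cost together with the sign reversal coming from $U_P<0$, which together convert the utility comparison into the clean threshold $C_{inv}(T-\tau)>i$; once that reduction is in hand, both parts follow immediately from the sign of $C_{inv}$ and the monotonicity in $\tau$.
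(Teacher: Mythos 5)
Your proposal is correct and follows essentially the same route as the paper: reduce the post-halt decision to the threshold criterion $C_{inv}(T-\tau) > i$ by comparing $V^{1,0}$ with the invest-and-face-Holmstr\"om--Milgrom value, then do the elementary sign and monotonicity analysis in $\tau$. You are in fact slightly more explicit than the paper about how the sunk cost enters (via $U_P(x-i-y)=e^{\gamma_P i}U_P(x-y)$ and the sign reversal from $U_P<0$), which is a welcome clarification rather than a deviation.
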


We are in a position to solve the before-default principal problem. Proceeding analogously as in Section \ref{sec:OC}, the before-default value function is given by the Markovian control problem
$$
V_P=\sup_{y \ge y_{PC}}V_0(0,x,y),
$$
with
\begin{equation}
V(0,x_0,y_0)=\sup_{\pi=(Z,K) \in \zeta} \E\left[ U_P(X_{T}^{\pi} - W_{T}^{\pi} )(1-N_T)+\int_0^T \max(V^{1,0}(X_t^\pi,W_t^{\pi}),V^{1,1}(t,X_t^\pi,W_t^{\pi}))\lambda e^{-\lambda t}\,dt \right],
\end{equation}
and 
$$
dX_t=a^*(Z_t)(1-N_t)\,dt+(1-N_t)\,dB_t^*,
$$
\begin{align}\label{wageinvest}
dW^{\pi}_t &=  Z_s(1-N_s) dB_s^* + K_s(1-N_s) dM_s \\
&+ \left\{ \frac{\gamma_A}{2}   Z_s^2 + \kappa(a^*(Z_s))+ \frac{\lambda}{\gamma_A} [\exp(-\gamma_A K_s) - 1 + \gamma_A K_s ] \right\} (1-N_s)ds.\nonumber
\end{align}
\begin{Remark} To be perfectly complete, we develop in the appendix the martingale optimality principle which makes it possible to obtain the dynamics \eqref{wageinvest}.
\end{Remark}

\begin{Theorem}
\label{theo:main2}
We have the following explicit characterizations of the optimal contracts. Assume the constant $A$ in the definition of the set of admissible efforts ${\cal B}$ satisfies 
$$
A> \dfrac{\gamma_P + \kappa^{-1}}{\kappa(\gamma_P + \gamma_A) + 1}.
$$
Let $Z^*_t  = \dfrac{\gamma_P + \kappa^{-1}}{\gamma_P + \gamma_A + \kappa^{-1}}$ and 

 $$K^*_t =\frac{1}{\gamma_P+\gamma_A}\log\left(\frac{\Phi_0(t)}{\min \Big\{1, \exp(\gamma_Pi) \Phi_1(t, \theta)\Big\}}\right)$$

with $\Phi_0$ as defined above with :
$$  c_1 := \frac{\gamma_P^2 \gamma_A}{2(\gamma_P+\gamma_A + \kappa^{-1}) } - \frac{\gamma_P \kappa^{-1} (\gamma_P+\kappa^{-1})}{2{(\gamma_P + \gamma_A + \kappa^{-1})}} - \lambda \frac{\gamma_P + \gamma_A}{\gamma_A} $$
 \text{and} 
 $$ c_2(t) = \lambda \frac{\gamma_P + \gamma_A}{\gamma_A} \min \Big\{1, \exp(\gamma_Pi) \Phi_1(t, \theta)\Big\}^{\frac{\gamma_A}{\gamma_P+\gamma_A}}$$

Then $(y_{PC},Z^*,K^*)$ parametrizes the optimal wage for the Moral Hazard problem with a possibility for mitigation. The Agent performs the optimal action $\dfrac{Z^*}{\kappa}$ before $\tau$ and $\dfrac{\theta Z^*}{\kappa}$ after $\tau$ 
when $\tau < T-i/C_{inv}$. 
\end{Theorem}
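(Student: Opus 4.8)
The plan is to mirror the strategy used for Theorems \ref{theo:mainFB} and \ref{theo:main1}: exhibit an explicit classical solution of the HJB equation attached to the before-default Markov control problem, check that the candidate controls are bounded and independent of $y$, and close with a verification theorem together with an exponential-moment check. The only genuinely new ingredient relative to Theorem \ref{theo:main1} is the post-default continuation value, so the first step is to simplify it. Writing $m(t):=\min\{1,\exp(\gamma_P i)\Phi_1(t,\theta)\}$, I would record that, because $U_P<0$ and investing the sunk cost $i$ shifts the post-default input from $x$ to $x-i$ (so that the investment value at $\tau$ is $U_P(x-y)\exp(\gamma_P i)\Phi_1(t,\theta)$), the cost-adjusted comparison behind Lemma \ref{lemma:new1} reads $\max\big(V^{1,0}(x,y),\,U_P(x-y)\exp(\gamma_P i)\Phi_1(t,\theta)\big)=U_P(x-y)\,m(t)$. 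Thus the two post-default regimes collapse into a single multiplicative factor $m(t)$, which is precisely the object that will surface inside $c_2(t)$.

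With this reduction, the HJB equation governing $V$ is obtained from \eqref{eq:HJB2} by replacing the jump term $\lambda[U_P(x-y-K)-v]$ by $\lambda[U_P(x-y-K)\,m(t)-v]$, every other term of the generator being unchanged since the before-default dynamics of $X$ and $W^\pi$ are identical to those of the no-mitigation model. I would then substitute the ansatz $v(t,x,y)=U_P(x-y)\Phi_0(t)$ and exploit the separability of $U_P$: every partial derivative of $v$ carries the common factor $U_P(x-y)$, which factors out (it is negative, turning the optimization into the same infimum as in the proof of Lemma \ref{lem:temp2}). The first-order condition in $Z$ is untouched by $m(t)$ and returns $Z^*_t=(\gamma_P+\kappa^{-1})/(\gamma_P+\gamma_A+\kappa^{-1})$, while the first-order condition in $K$ becomes $\Phi_0(t)e^{-\gamma_A K}=m(t)e^{\gamma_P K}$, i.e. $e^{(\gamma_P+\gamma_A)K}=\Phi_0(t)/m(t)$, yielding the announced $K^*_t=\frac{1}{\gamma_P+\gamma_A}\log\big(\Phi_0(t)/m(t)\big)$.

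Injecting $(Z^*,K^*)$ back into the factored equation, the terms carrying $e^{-\gamma_A K^*}$ and $e^{\gamma_P K^*}$ combine into $\lambda\frac{\gamma_P+\gamma_A}{\gamma_A}\,m(t)^{\gamma_A/(\gamma_P+\gamma_A)}\Phi_0(t)^{\gamma_P/(\gamma_P+\gamma_A)}$, while the residual jump contribution is the constant $-\lambda\frac{\gamma_P+\gamma_A}{\gamma_A}\Phi_0(t)$; together with the $Z^*$- and $a^*$-terms this reproduces a Bernoulli equation $\Phi_0'(t)+c_1\Phi_0(t)+c_2(t)\Phi_0(t)^{\gamma_P/(\gamma_P+\gamma_A)}=0$ with $\Phi_0(T)=1$, the stated constant $c_1$, and the time-dependent coefficient $c_2(t)=\lambda\frac{\gamma_P+\gamma_A}{\gamma_A}m(t)^{\gamma_A/(\gamma_P+\gamma_A)}$. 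Since $m$ is continuous, bounded, and bounded away from $0$ on $[0,T]$ in every parameter regime, this scalar Cauchy problem has a unique positive solution $\Phi_0$, so the candidate $v=U_P(x-y)\Phi_0(t)$ is a bona fide classical solution and $K^*$ is well defined and bounded.

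Finally I would run the verification argument exactly as for Theorem \ref{theo:main1}: $v$ solves the HJB equation classically, the optimal controls are bounded and free of $y$, whence $v=V$ and the supremum over $y\ge y_{PC}$ is attained at $y^\ast=y_{PC}$ by monotonicity; the exponential-moment check on $W^\ast=Y_T^{(y_{PC},Z^*,K^*)}$ is unchanged because $B^\ast$ is Brownian and $K^\ast$ is deterministic. The after-default optimal action $\theta Z^*/\kappa$ is read directly off the post-default Holmström--Milgrom problem with output degraded by $\theta$, i.e. the problem whose value function is $V^{1,1}$. The main obstacle I anticipate is the regularity of $c_2$: when $C_{inv}>0$ and $i<T C_{inv}$, the factor $m(t)$ has a kink at the switching time $t_{max}=T-i/C_{inv}$, so $c_2$ is continuous but not $C^1$ there. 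I would therefore argue that $\Phi_0$ is nonetheless $C^1$ (its derivative is a continuous function of $t$ through the ODE itself, so $\partial_t v$ is continuous and $v\in C^{1,2}$), which is all the verification theorem requires, and check that the theorem applies across $t_{max}$; in the degenerate cases $C_{inv}\le 0$ or $i\ge T C_{inv}$ one has $m\equiv 1$ and recovers Theorem \ref{theo:main1} verbatim.
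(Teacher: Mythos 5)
Your proposal is correct and follows essentially the same route as the paper: collapse the post-default alternatives into the single factor $\min\{1,\exp(\gamma_P i)\Phi_1(t,\theta)\}$ appearing in the jump term of the HJB equation, use the multiplicative ansatz $U_P(x-y)\Phi_0(t)$ to reduce to a Bernoulli ODE with the time-dependent coefficient $c_2(t)$, and conclude by the same verification argument as in Theorem \ref{theo:main1}. You in fact supply details the paper leaves implicit (the role of the sunk cost in producing the $\exp(\gamma_P i)$ factor, and the regularity of $\Phi_0$ across the kink of $c_2$ at $t_{max}$), but the underlying argument is the same.
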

\begin{proof}

The reasoning used to compute the optimal Moral Hazard contract very much parallels the reasoning used above in Section \ref{sec:OC}. As a consequence, we are much more brief in the following proof.\\
The Hamilton-Jacobi-Bellman equation associated to the value function $V_0$ is the following : 
\begin{align}
\label{eq:HJB2}
0=\partial_t v_0(t,x,y) + \inf_{Z,K} \left\{ \partial_xv_0(t,x,y)\dfrac{Z}{\kappa} + \partial_yv_0(t,x,y) \left[\frac{\gamma_A}{2} Z^2 + \frac{Z^2}{2 \kappa}  + \frac{\lambda}{\gamma_A} [\exp(-\gamma_A K) - 1]\right] \right. \nonumber\\
\left.+ \lambda \left[ v_1(t,x,y+K) - v_0(t,x,y)\right]+ \partial_{yy} v_0(t,x,y) \frac{Z^2}{2} + \frac{1}{2} \partial_{xx} v_0(t,x,y)+ \partial_{xy}v_0(t,x,y) Z 
 \right\},
\end{align}
with the boundary condition :
$$v_0(T,x,y) = U_P(x-y)$$
where
$$v_1(t,x,y) =   U_P(x-y) \min \Big\{1, \exp(\gamma_Pi) \Phi_1(t, \theta)\Big\}. 
$$

\begin{Lemma}
\label{lem:temp10}
Assume $A> \dfrac{\gamma_P + \kappa^{-1}}{\gamma_P + \gamma_A + \kappa^{-1}}$. The function $v_0(t,x,y)=U_P(x-y) \Phi_0(t),$
with 
$$ \Phi_0(t) := \exp(-c_1 t) \left\{ \exp(c_1 \frac{\gamma_A}{\gamma_P + \gamma_A}T) + \frac{\gamma_A}{\gamma_P+\gamma_A} \int_t^T c_2(s) \exp(\frac{\gamma_A}{\gamma_P + \gamma_A} c_1 s) ds \right\}^{\frac{\gamma_P+\gamma_A}{A}}$$
where 
$$  c_1 = \frac{\gamma_P^2 \gamma_A}{2(\gamma_P+\gamma_A + \kappa^{-1}) } - \frac{\gamma_P \kappa^{-1}(\gamma_P+\kappa^{-1})}{2{(\gamma_P + \gamma_A + \kappa^{-1})}} - \lambda \frac{\gamma_P + \gamma_A}{\gamma_A} $$
 \text{and}
 $$c_2(t) = \lambda \frac{\gamma_P + \gamma_A}{\gamma_A} \min \Big\{1, \exp(\gamma_Pi) \Phi_1(t, \theta)\Big\}^{\frac{\gamma_A}{\gamma_P+\gamma_A}}.$$
solves in the classical sense the HJB equation (\ref{eq:HJB2}). In particular $Z^*_t = \dfrac{\gamma_P + \kappa^{-1}}{\gamma_P + \gamma_A + \kappa^{-1}}$ and $K^*_t =\frac{1}{\gamma_P+\gamma_A}\log\left(\dfrac{\Phi_0(t)}{\min \Big\{1, \exp(\gamma_Pi) \Phi_1(t, \theta)\Big\}}\right)$.
\end{Lemma}
\begin{proof}
The proof of this lemma is a direct adaptation of the proof of Lemma \ref{lem:temp2} to which we refer the reader.
\end{proof}

The proof of the final result relies on the regularity of $v_0$ and a standard verification result. Because the controls are free of $y$, we deduce that $V_P=V_0(0,x,y_{PC}).$
\end{proof}

The main change brought about by investment involves the halt related control $K^*$. Indeed the optimal  $Z^*$ in the Moral Hazard case are simply the optimal "Holmström-Milgrom" controls for the related production process.  At first glance, the optimal control $K^*$ seems to be quite different from that of Theorem \ref{theo:main1}. However one may verify that when we are in a setting where investment is never optimal (through the criteria of Lemma \ref{lemma:new1}), the expression for $K^*$ simplifies to exactly that of Theorem \ref{theo:main1}. The key to deduce this is that in such a setting, $\min \Big\{1, \exp(\gamma_Pi) \Phi_1(t, \theta)\Big\}=1$. We may therefore focus our analysis on the effects on investment when investing may be optimal (i.e. when $ C_{inv} > 0 \quad \text{and} \quad i < T C_{inv}$).  In such a setting, $K^*$ has two phases : 
\begin{itemize}
\item[-] before $t_{max}$, $K^*$ is adjusted to account for the possibility of risk mitigation
\item[-] after $t_{max},$ $K^*$ has the same values as without mitigation. Indeed : 
$$\min \Big\{1, \exp(\gamma_Pi) \Phi_1(t, \theta)\Big\}=1 \quad \text{for} \quad t \geq t_{max}.$$
\end{itemize}

We are able to analyze the effect of different parameters and to do so represent the deterministic part of $K^*$ as a function of time in the following figures. \\

 We fix parameters $\gamma_P = \kappa = T = 1, \gamma_A = 0.5$ : again this allows for mitigation to be optimal before some $t_{max}$. 
\begin{figure}[htb!]
\centering

\begin{minipage}{0.31\textwidth}
  \centering
  \includegraphics[width=.95\linewidth]{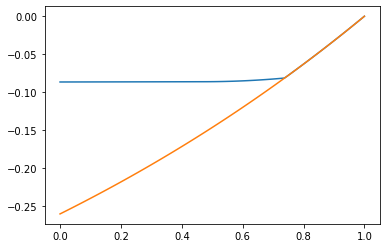}
  \captionof{figure}{$\lambda=0.5$, $i=0.1$, $\theta=0.9$ }
  \label{fig:test22}
\end{minipage}%
~
~
\begin{minipage}{0.31\textwidth}
  \centering
  \includegraphics[width=.95\linewidth]{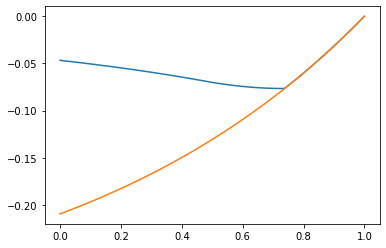}
  \captionof{figure}{$\lambda=1$, $i=0.1$, $\theta=0.9$. }
  \label{fig:test23}
\end{minipage}%
~
~
\begin{minipage}{0.31\textwidth}
  \centering
  \includegraphics[width=.95\linewidth]{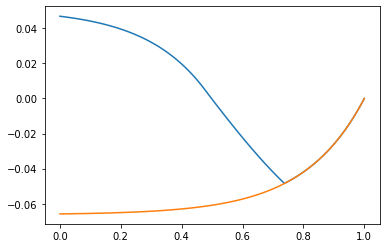}
  \captionof{figure}{$\lambda=5$, $i=0.1$, $\theta=0.9$. }
  \label{fig:test24}
\end{minipage}
\end{figure}

\begin{figure}[htb!]
\centering
\begin{minipage}{0.31\textwidth}
  \centering
  \includegraphics[width=.95\linewidth]{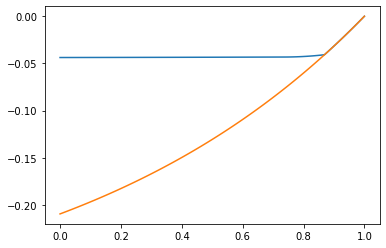}
  \captionof{figure}{$i=0.05$, $\lambda=1$,  $\theta=0.9$ }
  \label{fig:test25}
\end{minipage}%
~
~
\begin{minipage}{0.31\textwidth}
  \centering
  \includegraphics[width=.95\linewidth]{MH1}
  \captionof{figure}{$i=0.1$, $\lambda=1$,  $\theta=0.9$ . }
  \label{fig:test26}
\end{minipage}%
~
~
\begin{minipage}{0.31\textwidth}
  \centering
  \includegraphics[width=.95\linewidth]{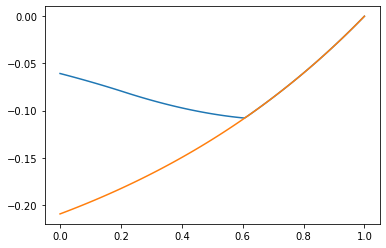}
  \captionof{figure}{$i=0.15$, $\lambda=1$,  $\theta=0.9$ . }
  \label{fig:test27}
\end{minipage}
\end{figure}

\begin{figure}[htb!]
\centering
\begin{minipage}{0.31\textwidth}
  \centering
  \includegraphics[width=.95\linewidth]{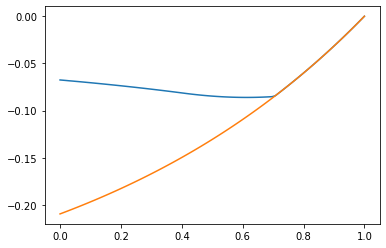}
  \captionof{figure}{$i=0.1$, $\lambda=1$,  $\theta=0.85$ }
  \label{fig:test28}
\end{minipage}%
~
~
\begin{minipage}{0.31\textwidth}
  \centering
  \includegraphics[width=.95\linewidth]{MH1}
  \captionof{figure}{$i=0.1$, $\lambda=1$,  $\theta=0.9$ . }
  \label{fig:test29}
\end{minipage}%
\end{figure}

We immediately observe that with mitigation, the value of $K^*$ before $t_{max}$ and is higher than without mitigation : the possibility for mitigation shrinks the opportunities for speculation  (see Figures \ref{fig:test22} to \ref{fig:test24}) and increasingly so as the probability of a halt increases. In fact the sign of $K^*$ may now change over the duration of the contracting period : see  Figure \ref{fig:test24}. Quite naturally, $t_{max}$ varies with $\theta$ and $i$. Indeed it decreases as $i$ increases or $\theta$ decreases : as the cost of investment increases and/or the level of degradation in continued production increases, more time is needed for investment for continued production to be worth it. \\

\newpage
\section{Appendix}
We sketch the martingale optimality principle arising from the Agent’s problem in the investment setting. We set
$\mathbb{H}^2_{\mathbb{G}}$ is the set of $\mathbb{G}-$ adapted processes $Z$ with $\E[\int_0^T Z_s^2 ds] < +\infty$ and $\mathbb{S}^2_{\mathbb{G}}$ is the set of $\mathbb{G}-$predictable processes $Y$ with cadlag paths such that $Z$ with $\E[\sup_{t \in [0,T]}Y^2_t] < +\infty$. For a   $\mathbb{G}_{\tau}-$measurable random variable $D$ with values in $\left\{ 0,1 \right\}$ that models the investment decision, we consider
a contract $W$ as a $\mathbb{G}_T$ measurable r.v. which can be decomposed under the form : 
$$ W = W^{0}\textbf{1}_{T < \tau} + \left( W^{1,1}_T(\tau)\textbf{1}_{D=1} + W^{1,0}_T(\tau)\textbf{1}_{D=0}\right)\textbf{1}_{\tau \leq T},$$
where
$W^{0}$ is $\mathbb{F}_T$-measurable, and $W_t^{1,1}(u)$ and $W_t^{1,0}(u)$ are $\mathbb{F}_t \otimes \mathcal{B}(\mathbb{R}_+)$ measurable, with in particular  $W^{1,0}_T(\tau) = W^{1,0}_{T \wedge \tau}(\tau). $
Given a contract $W$, the agent faces the following control problem,
$$ \sup_{a \in \mathcal{B}} \mathbb{E}^{\mathbb{P}^{a}}\E\left[ U_A\left(W - \int_0^T \kappa(a_s) ds\right)\right].$$
Remember that an effort process is now a $\mathbb{G}-$adapted process $(a_t)_{t \in [0,T]}$ and consequently has the form :
$$ a_t = a_t^{0}\textbf{1}_{t < \tau} + a_t^{1}(\tau) \textbf{1}_{t \geq \tau}$$
where $a^0$ is $\mathbb{F}-$adapted and $a^1$ is $\mathbb{F}_t \otimes \mathcal{B}(\mathbb{R}_+)$ measurable and where both are assumed bounded by some constant $A$. By convention, we still denote by $\mathcal{B}$ the set of such effort.

\begin{Lemma}
Suppose that there exists some unique triplet $(Y, Z, K)$ in $\mathbb{S}^{2}_{\mathbb{G}} \times \mathbb{H}^2_{\mathbb{G}} \times  \mathbb{H}^2_{\mathbb{G}} $ such that : 
$$ Y_t = W - \int_t^T Z_s((1-N_s) + N_s \textbf{1}_{D=1})dB_s - \int_t^T K_s(1-N_s) dM_s - \int_t^T f(s, Z_s, K_s) ds,$$
where 
\begin{align*}
 f(s, Z_s, K_s) &=  \left( \lambda K_s + \frac{\lambda}{\gamma_A} (e^{-\gamma_A K_s}-1)\right)(1-N_s) \\&+
 \frac{1}{2} \gamma_A Z_s^2 ((1-N_s) + N_s \textbf{1}_{D=1})
+ \inf_{a \in \mathcal{B}} \left\{ \kappa(a_s) - a_s Z_s(1-N_s) - \theta a_s Z_s \textbf{1}_{\tau \leq s,  D=1} \right\}, 
\end{align*}
then 
$$ R_t^{a} = U_A\left(Y_t - \int_0^t \kappa(a_s) ds \right)$$
satisfies a Martingale Optimality Principle for the Agent's problem in this setting. 

\end{Lemma}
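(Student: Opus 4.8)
The plan is to verify the four defining properties of the martingale optimality principle (Lemma \ref{lem:MOP}) for the family $(R^a)_{a\in\Bc}$, reusing the argument structure already deployed for the BSDE \eqref{BSDESB} but now adapted to the enlarged diffusion coefficient $(1-N_s)+N_s\ind_{D=1}$ and the post-default effort $a^1$. Properties (1) and (3) are immediate: evaluating the BSDE at $t=T$ gives $Y_T=W$, hence $R_T^a=U_A(W-\int_0^T\kappa(a_s)\,ds)$ for every $a$; and since the driver $f$ already contains the infimum over $a$, the solution $Y$, and in particular $Y_0$, does not depend on the chosen action, so $R_0^a=U_A(Y_0)$ is independent of $a$.

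The core of the proof is the supermartingale/martingale dichotomy. First I would set $G_t:=Y_t-\int_0^t\kappa(a_s)\,ds$ and apply the It\^o formula for jump processes to $R_t^a=U_A(G_t)=-\exp(-\gamma_A G_t)$, using $U_A'=-\gamma_A U_A$ and $U_A''=\gamma_A^2 U_A$. The continuous martingale part is proportional to $R_{t^-}Z_t\big((1-N_t)+N_t\ind_{D=1}\big)\,dB_t$, while the jump part, written through $dN_t=dM_t+\lambda(1-N_t)\,dt$, contributes a genuine $\P^0$-martingale increment $R_{t^-}(e^{-\gamma_A K_t}-1)(1-N_{t^-})\,dM_t$ together with its compensator $\lambda R_{t^-}(e^{-\gamma_A K_t}-1)(1-N_t)\,dt$. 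Here I would exploit the key algebraic simplification that the two indicators are mutually exclusive, so $\big((1-N)+N\ind_{D=1}\big)^2=(1-N)+N\ind_{D=1}$, which keeps the second-order term clean.

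Next I would pass to $\P^a$ by Girsanov, substituting $dB_t=dB_t^a+\big(a_t^0(1-N_t)+\theta a_t^1 N_t\ind_{D=1}\big)\,dt$; the product with the diffusion coefficient again collapses, by mutual exclusivity, to $a_tZ_t(1-N_t)+\theta a_tZ_tN_t\ind_{D=1}$. Plugging in the explicit driver $f$, the terms involving $\lambda K$, $\tfrac{\lambda}{\gamma_A}(e^{-\gamma_A K}-1)(1-N)$ and $\tfrac{\gamma_A}{2}Z^2\big((1-N)+N\ind_{D=1}\big)$ cancel exactly, and the surviving $\P^a$-drift is
$$
\gamma_A R_{t^-}\Big[\kappa(a_t)-a_tZ_t(1-N_t)-\theta a_tZ_tN_t\ind_{D=1}-\inf_{a'\in\Bc}\big\{\kappa(a')-a'Z_t(1-N_t)-\theta a'Z_tN_t\ind_{D=1}\big\}\Big].
$$
Because the bracketed quantity is nonnegative by definition of the infimum and $R_{t^-}<0$, this drift is nonpositive, giving the $\P^a$-supermartingale property (2); and choosing $a^*$ to be the pointwise minimizer --- explicitly the truncated $Z_t/\kappa$ before $\tau$ and $\theta Z_t/\kappa$ after $\tau$ on $\{D=1\}$ --- annihilates the bracket and yields the $\P^{a^*}$-martingale property (4).

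The main obstacle will be the integrability bookkeeping needed to upgrade these local-martingale statements to true martingales under each $\P^a$. This rests on $(Z,K)\in\mathbb H^2_{\mathbb G}\times\mathbb H^2_{\mathbb G}$, the uniform boundedness of the admissible efforts, and the exponential-moment hypothesis $\E(\exp(-2\gamma_A W))<+\infty$ propagated to $Y$ through the BSDE; together these guarantee that $R^a$ is of class (D), so that the supermartingale property and the sign of the drift combine to give the optimality comparison of Lemma \ref{lem:MOP}. The remaining algebraic cancellations are routine and parallel to the computation following \eqref{BSDESB}, so I would state them compactly rather than expand every line.
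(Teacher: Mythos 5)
Your proposal follows essentially the same route as the paper's appendix proof: compute the It\^o decomposition of $R^a$, pass to $\P^a$ via Girsanov, use the definition of the driver $f$ (which absorbs the infimum) to reduce the drift to a nonpositive term proportional to $\kappa(a)-aZ(1-N)-\theta aZN\ind_{D=1}$ minus its infimum, and identify the minimizer as the truncated $Z/\kappa$ before $\tau$ and $\theta Z/\kappa$ after $\tau$ on $\{D=1\}$. Your explicit remarks on the idempotence of the diffusion coefficient $(1-N)+N\ind_{D=1}$ and on the class~(D) argument needed to upgrade local (super)martingales to true ones are points the paper leaves implicit, but they do not change the argument.
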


\begin{proof}
By construction, $R_T^{a} = U_A\left(W - \int_0^T \kappa(a_s) ds\right)$ and $R_0^a$ is independent of the Agent's action $a$. 
As in Section \ref{sec:OC}, we compute the variations of $R^{a}$ to obtain: 
\begin{align*}
dR^{a}_s &= -\gamma_A R_s^{a} Z_s((1-N_s) + N_s \textbf{1}_{D=1}) dB_s^{a} + R_s^{a}(e^{-\gamma_A K_s}-1)(1-N_s)dM_s \\
&+R_s^{a} \gamma_A \left( \frac{1}{2} \gamma_A Z_s^{2}((1-N_s) + N_s \textbf{1}_{D=1}) - f(s, Z_s, K_s) + \kappa(a_s) + (\lambda K_s + \frac{\lambda}{\gamma_A} (e^{-\gamma_A K_s}-1)) (1-N_s)\right)\\
&+ R_s^{a}\gamma_A\left( -a_s Z_s (1-N_s) + \theta a_s N_s \textbf{1}_{D=1}\right)\\
&= -\gamma_A R_s^{a} Z_s((1-N_s) + N_s \textbf{1}_{D=1}) dB_s^{a} + R_s^{a}(e^{-\gamma_A K_s}-1)(1-N_s)dM_s \\
&+R_s^{a} \gamma_A \left( \frac{1}{2} \gamma_A Z_s^{2}((1-N_s) + N_s \textbf{1}_{D=1}) - f(s, Z_s, K_s) + \kappa(a_s^{0}(1-N_s))+  \kappa(\theta a_s^{1} N_s \textbf{1}_{D=1}) + (\lambda K_s + \frac{\lambda}{\gamma_A} (e^{-\gamma_A K_s}-1)) (1-N_s)\right)\\
&+ R_s^{a}\gamma_A\left( -a_s^{0} Z_s (1-N_s) + \theta a_s^1 N_s \textbf{1}_{D=1}\right)\\
\end{align*}
and therefore $\mathbb{R}^{a}$ is a super-martingale for every $a$ in $\mathcal{B}$. Setting : 
$$ {a^0_s}^*(z) = -A\textbf{1}_{\frac{z}{\kappa} < -A} +  A\textbf{1}_{\frac{z}{\kappa} > A} + \frac{z}{\kappa}\textbf{1}_{ -A\leq \frac{z}{\kappa} \leq A}$$
and 
$$ {a^1_s}^*(z) = -A\textbf{1}_{ \frac{\theta z}{\kappa} \leq -A} +  A\textbf{1}_{\frac{\theta z}{\kappa} >  A} + \frac{\theta z}{\kappa}\textbf{1}_{-A\leq \frac{\theta z}{\kappa} \leq A},$$
then
$$ a^*_t = {a_t^{0}}^*\textbf{1}_{t < \tau} + {a_t^{1}}^* \textbf{1}_{t \geq \tau}.$$
We get that $\R^{a^*}$ is a  $\mathbb{P}^{a^*}$-martingale and the Agent's response given $W$ is then $a^*.$

\end{proof}

It remains to show that there actually exists a unique solution to 
$(Y, Z, K)$ in $\mathbb{S}^{2}_{\mathbb{G}} \times \mathbb{H}^2_{\mathbb{G}} \times  \mathbb{H}^2_{\mathbb{G}} $ to : 
$$ Y_t = W - \int_t^T Z_s((1-N_s) + N_s \textbf{1}_{D=1})dB_s - \int_t^T K_s(1-N_s) dM_s - \int_t^T f(s, Z_s, K_s) ds,$$
where 
\begin{align*}
 f(s, Z_s, K_s) &=  \left( \lambda K_s + \frac{\lambda}{\gamma_A} (e^{-\gamma_A K_s}-1)\right)(1-N_s) \\&+
 \frac{1}{2} \gamma_A Z_s^2 ((1-N_s) + N_s \textbf{1}_{D=1})
+ \inf_{a \in \mathcal{B}} \left\{ \kappa(a_s) - a_s Z_s(1-N_s) - \theta a_s Z_s \textbf{1}_{\tau \leq s,  D=1} \right\}.
\end{align*}

To to this, first note for any $s$ in $[0,T]$ fixed, and for any $t \in [s, T]$ there exists a unique pair $(Y^{i}, Z^{i}) \in \mathbb{S}^{2}_{\mathbb{G}}\times \mathbb{H}^{2}_{\mathbb{G}}$ solution to the BSDE : 
\begin{equation}
\label{Brown}
 Y^i_t(s)  = W_T^{1,1}(s)\textbf{1}_{D=1} - \int_t^T f^{1}(Z_s^{i}(s)) ds - \int_t^T Z_s^i(s) dB_s, 
 \end{equation}
where $f^{1}(z) = \frac{1}{2}\gamma_A z^2 + \inf_{a \in \mathcal{B}}(\kappa(a) - \theta a z)$ and where the notation $(Y^i(s), Z^i(s))$ is used to emphasize the dependency in $s$ of the terminal condition and its effect on the solution.  This existence result simply follows from the fact that for each $s$, (\ref{Brown}) is now simply a Brownian BSDE that fits into the classical quadratic setting of Briand and Hu. We may then set : 
$$ \tilde{W} = Y_\tau^i(\tau) \textbf{1}_{\tau \leq T} \textbf{1}_{D=1} + W^{1, 0}_{T \wedge \tau}(\tau) \textbf{1}_{\tau \leq T} \textbf{1}_{D=0} + W^{0} \textbf{1}_{T < \tau},$$
which is a $\mathbb{G}_{T \wedge \tau}$ measurable random-variable. We set : 
$$ f^{2}(z,k) = \frac{1}{2}\gamma_A z^2  +  \lambda k + \frac{\lambda}{\gamma_A} (e^{-\gamma_A k}-1) + \inf_{a \in \mathcal{B}}\left( \kappa(a)- aZ\right).$$

This fits right into the setting of the recent work \cite{Martin2020} on a default BSDE for Principal Agent problems. In particular, there exists a unique triplet $(\tilde{Y}, \tilde{Z}, \tilde{K} )$ in $ \mathbb{S}^{2}_{\mathbb{G}}\times \mathbb{H}^{2}_{\mathbb{G}} \times \mathbb{H}^{2}_{\mathbb{G}}$ such that~: 
$$ \tilde{Y}_t = \tilde{W} - \int_{t \wedge \tau}^{T \wedge \tau} \tilde{Z}_s dB_s -   \int_{t \wedge \tau}^{T \wedge \tau} \tilde{K}_s dM_s  - \int_{t \wedge \tau}^{T \wedge \tau} f^2(\tilde{Z}_s, \tilde{K}_s) ds. $$

Finally, setting : 
\begin{itemize}
\item $Y_t = \tilde{Y}_t (1-N_t) + Y^{i}_t(\tau) N_t \textbf{1}_{D=1}$
\item $Z_t = \tilde{Z}_t (1-N_t) + Z^{i}_t(\tau) N_t \textbf{1}_{D=1}$
\item $K_t = \tilde{K}_t (1-N_t)$
\end{itemize}
and noting that : 
$$ f(s, z,k) = f^{1}(z)N_s \textbf{1}_{D=1} + f^{2}(z,k)(1-N_s),$$
we obtain that $(Y,Z,K)$ is a solution to : 
\begin{align} 
\label{BS}Y_t = W - \int_t^T Z_s((1-N_s) + N_s \textbf{1}_{D=1})dB_s - \int_t^T K_s(1-N_s) dM_s - \int_t^T f(s, Z_s, K_s) ds.
\end{align}
Finally, uniqueness holds through a classical reasoning, noting that up to a change in probability, $Y$ is a local martingale that has continuous paths on a certain left-hand neighbourhood of $T$.

\newpage

\end{document}